\theoremstyle{plain}
\newtheorem*{theorem*}{Theorem}
\newtheorem{theorem}{Theorem}[section]
\newtheorem{thm}[theorem]{Theorem}
\newtheorem{lemma}[theorem]{Lemma}
\newtheorem{lem}[theorem]{Lemma}
\newtheorem{example}[theorem]{Example}
\newtheorem{definition}[theorem]{Definition}
\newtheorem{problem}[theorem]{Problem}
\newcommand{\refe}[1]{\stackrel{\eqref{#1}}{=}}
\newcommand{\up}[1]{\textup{#1}}
\newcommand{\dom}{\operatorname{dom}}
\newcommand{\nf}{\operatorname{nf}}
\newcommand{\ran}{\operatorname{ran}}
\newcommand{\fix}{\operatorname{fix}}
\newcommand{\A}{\mathsf{a}}
\newcommand{\D}{\mathsf{d}}
\newcommand{\R}{\mathsf{r}}
\newcommand{\Dd}{\mathsf{D}}
\newcommand{\Rr}{\mathsf{R}}
\newcommand{\F}{\mathsf{Fun}}
\newcommand{\Fix}{\mathsf{fix}}
\newcommand{\compo}{\mathbin{;}}
\newcommand{\meet}{\mathbin{\cdot}}
\newcommand{\add}{\operatorname{add}}
\newcommand{\view}{\operatorname{view}}
\newcommand{\upset}{\operatorname{\uparrow}}
\newcommand {\set}[1] { \{ #1 \} } 
\def\SS{{\mathscr S}}
\def\restr #1{{\restriction_{#1}}}
\begin{document}

\title{The algebra of functions with antidomain and range}
\begin{abstract}
We give complete, finite quasiequational axiomatisations 
to algebras of unary partial functions 
under the operations of composition, domain, antidomain, range 
and intersection.  
This completes the extensive programme of classifying algebras 
of unary partial functions under combinations of these operations.
We look at the complexity of the equational theories and provide a 
nondeterministic polynomial upper bound.
Finally we look at the problem of finite representability and show that finite algebras
can be represented as a collection of unary functions over a finite base set
provided that intersection is not in the signature.
\end{abstract}
\keywords{partial map, restriction semigroup, relation algebra,  domain, antidomain, range}
\subjclass[2010]{20M20, 03G15, 08A02}
\author{Robin Hirsch}
\address{Department of Computer Science, University College London, UK}
\email{r.hirsch@ucl.ac.uk}
\author{Marcel Jackson}
\address{Department of Mathematics and Statistics, La Trobe University, Victoria, Australia}
\email{m.g.jackson@latrobe.edu.au}
\author{Szabolcs Mikul\'as}
\address{School of Computer Science and Information Systems, Birkbeck College, University of London, UK}
\email{szabolcs@dcs.bbk.ac.uk}

\thanks{The second author was supported by ARC Future Fellowship FT120100666 and ARC Discovery Project DP1094578.}
\maketitle

\section{Introduction} 

The abstract algebraic study of partial maps goes back at least to Menger~\cite{men} 
and the subsequent work of Schweizer and Sklar~\cite{SS1,SS2,SS3,SS4}.  
A large body of work has followed, some of it specifically building on the work of 
Schweizer and Sklar (such as Schein~\cite{sch:DR}) but numerous other contributions 
with independent motivation starting from semigroup theory 
(where there is a close relationship to ample and weakly ample semigroups; 
see Hollings~\cite{hollings} for a survey), category theory 
(where there is a very close connection with restriction categories \cite{coclac,cocman}) 
and constructions in computer science \cite{jacsto:ITE,jacsto:modal}.  
Moreover there is a close connection to the more heavily developed algebraic theory 
of binary relations; see Maddux~\cite{mad} or Hirsch and Hodkinson~\cite{hirhod} 
in general and articles such as Hollenberg~\cite{hollenberg} 
(which also delves into equational properties of partial maps) and 
Desharnais, M\"oller and Struth~\cite{DMS}, where the development is closer in nature 
to the theme of applications of the algebra of partial maps.  
Of course, often the motivation has been across several of these fronts at once, 
with much of the category-theoretic development focussed toward computer science motivation, 
and articles such as Jackson and Stokes~\cite{jacsto:01,jacsto:modal} and Manes~\cite{man}
attempting in part to provide new links between the various perspectives.  
We make no attempt at a full survey here.  Some further references are given below, but other discussion and history can be found in Schein's early (but already substantial) survey article \cite{sch70a} or \cite{sch79} and in articles such as \cite{cocman,hollings,jacsto:modal}.

In each of the above approaches, the fundamental operation of composition of partial maps 
is accompanied by additional operations capturing facets of what it means to be a partial map.
Operations modelling the domain of a partial map are particularly ubiquitous in the literature 
but other frequently occurring operations are those modelling range, intersection, fixset, 
domain-complement, as well as programming-specific constructions such as 
if-then-else and looping.  Perhaps the foremost goal in the development of an abstract approach 
to partial maps is the construction of a system of axioms that can be proved sound and complete
relative to fragments of the first order theory of systems of partial maps closed under 
the given operations.  In this context, the present article completes a long programme of
investigation by giving  sound and complete (and finite) axiomatisations for arguably the 
last two remaining natural families.
These families are also the richest, consisting of essentially all of the basic operations
combined.  (A caveat is that we do not claim completeness when looping is included: 
for sufficiently rich signatures, it is shown in Goldblatt and Jackson~\cite{goljac} 
that there is no recursive system of axioms that will be sound and complete 
for even just the equational properties of partial maps with looping.)  
We also show that the complexity of deciding the validity of equations is
co-{NP}, and is complete for this class for sufficiently rich signatures.

\section{Preliminaries: operations and representability}
By a (unary) \emph{function} on a set $X$, we mean a partial map $f\colon X\to X$. 
We use $\dom(f)$ and $\ran(f)$ to denote the domain and range of $f$ respectively, 
and often write  $(x,y)\in f$ for $f(x)=y$.  
Domain and range may be recorded as functions by way of unary operations: 
we define $\D(f)$, the \emph{domain} of $f$ (as a function), 
as the identity relation restricted to $\dom(f)$
\begin{align*}
\D(f)&:=\{(x,x)\mid\exists y\ (x,y)\in f\},
\end{align*}
and $\R(f)$, the \emph{range} of $f$, 
as the identity relation restricted to $\ran(f)$
\begin{align*}
\R(f)&:=\{(y,y)\mid\exists x\ (x,y)\in f\}.
\end{align*}
Given two functions $f$ and $g$ on $X$, 
their \emph{composition} $f\compo g$ is defined as
\begin{align*}
f\compo g&:=\{(x,y)\mid \exists z((x,z)\in f\  \&\  (z,y)\in g)\},
\end{align*}
that is, $(f\compo g)(x)=g(f(x))$,
while $f\meet g$ denotes their intersection
\begin{align*}
f\meet g&:=\{(x,y)\mid (x,y)\in f\  \&\  (x,y)\in g\}.
\end{align*}
Observe that if $f$ and $g$ are functions on $X$ then so are
$\D(f), \R(f), f\compo g$ and $f\meet g$.

Many of the above motivations give rise to other operations of importance. 
For example, while the domain operation $\D$ models the possibility modality of dynamic logic, 
necessity of dynamic logic is instead modelled by ``antidomain''. 
The \emph{antidomain} of $f\colon X \to X$ is the function
\begin{align*}
\A(f)&:=\{(x,x)\mid x\in X\ \&\ \forall y\ (x,y)\notin f\}.
\end{align*}
Note that $X$ occurs as a parameter in the definition of antidomain 
just like the top element occurs in the definition of complement in boolean set algebras.
Observe that $\D(f)$ can be defined using $\A$ as $\D(f)=\A(\A(f))$. 
Similarly, we can define the identity function $1'$ 
\begin{align*}
1'&:=\{(x,x)\mid x\in X\}
\end{align*}
and $0$ will denote the empty function.
Observe that we can define these constants as $0=\A(x)\compo x$ (any $x$) and $1'=\A(0)$.
A final operation we consider is the \emph{fixset} operation defined by 
\[
\Fix(f):=\{(x,x)\mid (x,x)\in f\},
\]
which can also be defined using $\cdot$ and $\D$ as $\Fix(f)=\D(f)\cdot f$, 
or using $\cdot$ and $\compo$ by $\Fix(f)=f\cdot (f\compo f)$.
We will consider only signatures containing $\compo$ and usually omit explicit mention 
of $\D$ if $\A$ is present, and $\Fix$ if  $\cdot$ is present.

Given a similarity type $\tau$ with operations from among $\set{\compo,\meet,\D,\R,\A,\Fix,0,1'}$,
a \emph{$\tau$-algebra of functions} is a family of functions on some set $X$,
the \emph{base} of the algebra, augmented with the operations in $\tau$ as defined above
(using $X$ as the parameter).
We will denote by $\F(\tau)$ the class of $\tau$-algebras of functions.
When a $\tau$-algebra $\mathscr{S}$ is isomorphic 
to an element of $\F(\tau)$, we say that $\mathscr{S}$
is (functionally) \emph{representable}.

Functional representability has been considered for many subsignatures of 
$\set{\compo,\meet,\A,\D,\R}$ as well as combinations involving fixset and set subtraction.  
When intersection is not present, the article~\cite{jacsto:amon} 
provides a table describing finite axiomatisability, 
and describing whether the class is a variety or a quasivariety.  
Although the class of functionally representable algebras for the signature 
$\set{\compo, \D}$ is a variety \cite{tro}, all other functional representation classes 
for signatures without interscection form proper quasivarieties. 
For the relatively weak signatures $\tau$ with 
$\{\compo\}\subsetneq \tau\subseteq\{\compo,\R,\Fix\}$, no finite axiomatisation is possible, 
but all remaining cases have known complete finite axiomatisations except for 
the strongest of the signatures, $\set{\compo,\A,\R}$.
In the present article we will give a complete finite quasiequational axiomatisation 
for the signature $\set{\compo,\A,\R}$.

Signatures involving $\meet$ are discussed in depth in the introductory sections
of~\cite{jacsto:modal}.  
Again, aside from the somewhat artificial case of $\set{\compo,\cdot,\R}$, 
all cases have known axiomatisations except for $\set{\compo,\meet,\A,\R}$.
In the present article we give a finite equational axiomatisation characterising 
functional representability for the signature $\set{\compo,\meet,\A,\R}$.  
This result and the  characterisation of representability in the signature 
$\set{\compo,\A,\R}$ 
solve problems posed in the final subsection of~\cite{jacsto:modal}.

As is explained in~\cite{jacsto:modal}, many  natural operations can be written in terms of 
the signature $\set{\compo,\meet,\A,\R}$.  
For example, set subtraction $f-g$ (as examined in Schein \cite{sch:sub})
 can be written as $\A(f\cdot g)\compo f$ and
fixset can be defined as $\Fix(x)=\D(x)\meet x$.
However, two additional operations that cannot be expressed by $\set{\compo, \meet, \A, \R}$ 
are the 
\emph{preferential union} operation~$\sqcup$ and the 
\emph{maximum iterate} operation~${}^\uparrow$.   
The preferential union of $f$ with $g$ is defined to be $f(x)$ if $f(x)$ is defined and $g(x)$ otherwise.  In other words it is $f(x)\cup \A(f)\compo g(x)$, a union which always returns a function on functional arguments.  Preferential union can express   \textit{if-then-else} statements:   
\texttt{if }$f$ \texttt{then }$g$\texttt{ else }$h=\D(f)\compo g\;\sqcup\; \A(f)\compo h$ and in fact $f\sqcup g$ coincides with \texttt{if }$\D(f)$ \texttt{then }$f$\texttt{ else }$g$ so is identical to the \emph{override}  operation of 
Berendsen et al.~\cite{BJSV}.  Similarly, the \emph{update} operation of \cite{BJSV}  has $f$ update $g$ given by \texttt{if }$\D(f)\compo \D(g)$ \texttt{then }$g$\texttt{ else }$f$, so that update is a derived term (given $\compo,\sqcup, \A$).
Other variants of union are discussed in~\cite[\S2.3.2]{jacsto:modal}.  
Our results for the signatures $\set{\compo,\A,\R}$ and $\set{\compo,\meet,\A,\R}$ 
are extended here to include preferential union, thus subsuming the axiomatisation 
in~\cite{BJSV} (the axioms for the weaker signatures considered in~\cite{jacsto:modal} 
also subsume those of~\cite{BJSV}).  

The semantics of the maximum iterate operation are given by 
$$
f^\uparrow=\bigsqcup_{n<\omega} f^n\compo \A(f).
$$  
The maximum iterate operation can express while statements: 
\texttt{while}$(d)p = (\D(d)\compo p)^{\uparrow}\compo \A(d)$.
See~\cite{jacsto:modal} for more on this. 
Our axiomatisability results extend to signatures including ${}^\uparrow$
if we restrict ourselves to finite algebras.

The signature $\set{\compo,\D,\R}$ is one of the most obvious signatures and not surprisingly 
was one of the earliest signatures to receive serious attention through a series of articles 
by Schweizer and Sklar~\cite{SS1,SS2,SS3,SS4}.  
No complete axiomatisation for representability in this signature was found until the work of
Schein~\cite{sch:DR}, who gave a complete, finite quasiequational axiomatisation and a proof 
that the class is not a variety.  Schein's elegant representation gives only infinite
representations for finite $\set{\compo,\D,\R}$-algebras.  
In~\cite{jacsto:DR} it was shown that this representation also preserves~$\cdot$ 
and~$\Fix$ when they satisfy appropriate (sound) axioms.   
Schein's method of representation is invoked at one stage of our own representation, 
and we use a modification to show that for many signatures 
$\set{\compo,\D,\R}\subseteq \tau\subseteq \set{\compo,\D,\A,\R,\Fix,\sqcup,{}^\uparrow,0,1'}$ 
there are finite representations for finite $\tau$-algebras.  
This solves the first question in~\cite[\S10]{jacsto:DR}.
We leave the case when $\meet$ is in $\tau$ open.

We also look at the computational complexity of the equational theories and
prove that the validity problem is in {\bf co-NP} in all cases
when 
$\tau\subseteq\set{\compo,\meet,\sqcup, \D,\A,\R,\Fix,0,1'}$, 
and it is {\bf co-NP-complete} provided $\set{\compo,\A}\subseteq\tau$.   
It follows from~\cite{goljac} that the validity problem is $\Pi_1^1$-hard when 
$\set{\compo,\A,\Fix,\uparrow}\subseteq\tau$.

\section{Axioms for composition, (anti)domain, range and intersection}
In this section we recall known axiomatizations for
$\F(\tau)$ where $\tau\subseteq\set{\compo,\meet,\D,\A,\R}$.
We will assume that $\compo\in\tau$ and that either
$\A$ or $\D$ is in $\tau$ as well.

We introduce some notations and conventions.
Given a (not necessarily representable) $\tau$-algebra 
$\mathscr{S}=(S,\tau)$
such that $\D$ is in $\tau$ or is defined as $\A\A$,
we define the set $D(S)$ of \emph{domain elements} as
\begin{equation*}
D(S):=\{s\in S\mid \D(s)=s\}.
\end{equation*}
Lower case Greek letters $\alpha,\beta,\delta,\gamma,\dots$ will denote domain elements.

Next we list some (quasi)equations that are known to be valid in representable algebras.      
Associativity of $\compo$ is assumed throughout.

Domain axioms:
\renewcommand{\theequation}{\Roman{equation}}
\begin{align}
&\D(x)\compo x=x,\label{eq:leftid}\\
&\D(x)\compo\D(y)=\D(y)\compo\D(x),\label{eq:Dcomm}\\
&\D(\D(x))=\D(x),\\
&\D(x)\compo \D(x\compo y)=\D(x\compo y),\label{eq:Dorder}\\
&x\compo \D(y)=\D(x\compo y)\compo x,\label{eq:Dtwisted}\\
\intertext{and some of their consequences:}
&\D(x)\compo \D(y)=\D(\D(x)\compo y),\label{eq:needed}\\
&\D(x\compo \D(y))=\D(x\compo y).\label{eq:leftcong}
\end{align}
Associative $\{\compo, \D\}$-algebras obeying
axioms~\eqref{eq:leftid}--\eqref{eq:Dtwisted} have been given many names
\cite{bat,coclac,FGG,jacsto:01,man,tro} but the name \emph{restriction semigroup} 
(sometimes, ``one-sided restriction semigroup'') has emerged as the modern standard.

Meet axioms:
\begin{align}
&x\compo (y\cdot z)=(x\compo y)\cdot (x\compo z),\label{eq:meettwisted}\\
&x\cdot y=\D(x\cdot y)\compo x.\label{eq:meetorder}
\end{align}

\begin{thm}\label{thm:Drep}
\begin{itemize}
\item[(1)] 
\up(Trokhimenko \cite{tro}\up; see also \cite{jacsto:01} or \cite{man} 
for Cayley representation.\up)
The class $\F(\compo,\D)$ is the class of restriction semigroups,
that is, it is finitely axiomatised by associativity 
and \eqref{eq:leftid}\up{--}\eqref{eq:Dtwisted}.
\item[(2)] 
\up(Dudek and Trokhimenko \cite{dudtro}\up, Jackson and Stokes \cite{jacsto:03}.\up)
The class $\F(\compo, \cdot,\D)$ is finitely axiomatised by associativity, 
\eqref{eq:leftid}\up{--}\eqref{eq:Dtwisted}, the semilattice axioms for~$\cdot$ 
and \eqref{eq:meettwisted} and \eqref{eq:meetorder}.
\end{itemize}
\end{thm}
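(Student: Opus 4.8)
The plan is to treat soundness and completeness separately, with the bulk of the work in completeness via a Cayley-style (right-regular) representation. Soundness — that every algebra in $\F(\compo,\D)$ satisfies associativity and \eqref{eq:leftid}--\eqref{eq:Dtwisted}, and that every algebra in $\F(\compo,\cdot,\D)$ additionally satisfies the semilattice laws together with \eqref{eq:meettwisted} and \eqref{eq:meetorder} — is a direct check against the set-theoretic definitions, and I would dispatch it first. The derived consequences \eqref{eq:needed} and \eqref{eq:leftcong} I would record as preliminary lemmas, proved from the listed axioms, since they streamline the main argument.

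For completeness of part (1), take an abstract $\{\compo,\D\}$-algebra $\mathscr{S}=(S,\compo,\D)$ satisfying the axioms and let the base set be $S$ itself. To each $a\in S$ associate the partial map
$$\theta_a:=\{(x,\,x\compo a)\mid x\in S,\ x\compo\D(a)=x\},$$
which is single-valued by construction, hence a genuine partial function on $S$. I would then verify that $a\mapsto\theta_a$ is an embedding into $\F(\compo,\D)$. Injectivity is the pleasant part: since $\D(a)$ is idempotent it lies in $\dom\theta_a$, and $\theta_a(\D(a))=\D(a)\compo a=a$ by \eqref{eq:leftid}; moreover the largest domain element of $\dom\theta_a$ is exactly $\D(a)$, so $\theta_a=\theta_b$ forces first $\D(a)=\D(b)$ and then $a=\theta_a(\D(a))=\theta_b(\D(b))=b$. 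Preservation of $\D$ is immediate: the domain of $\theta_a$ is cut out by $x\compo\D(a)=x$, and on it $\theta_{\D(a)}$ acts as the identity, so $\theta_{\D(a)}=\D(\theta_a)$.

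The technical heart, and the step I expect to be the main obstacle, is preservation of composition: showing that the concrete composite $\theta_a\compo\theta_b$ (apply $\theta_a$, then $\theta_b$) has exactly the same domain as $\theta_{a\compo b}$, after which the two agree in value since $(x\compo a)\compo b=x\compo(a\compo b)$. This is precisely the domain-bookkeeping that the twisted law \eqref{eq:Dtwisted} is designed to handle: one inclusion rewrites $a\compo\D(b)$ as $\D(a\compo b)\compo a$ and uses \eqref{eq:Dorder} to pass between $\D(a)$, $\D(b)$ and $\D(a\compo b)$, while the reverse inclusion additionally calls on \eqref{eq:leftcong} to collapse $\D(x\compo a\compo b)$ to $\D(x\compo a)$ once $x\compo a\compo\D(b)=x\compo a$ is known. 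I would isolate the identity $\D(x\compo a)\compo x=x\compo\D(a)$, a direct instance of \eqref{eq:Dtwisted}, as the lever that makes both inclusions go through.

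For part (2) the same base set and the same assignment $a\mapsto\theta_a$ work verbatim, and it remains only to check that intersection is preserved, i.e.\ that $\theta_{a\meet b}=\theta_a\meet\theta_b$. Here the two meet axioms do all the work: \eqref{eq:meetorder} gives $a\meet b=\D(a\meet b)\compo a=\D(a\meet b)\compo b$, which forces any $x$ in $\dom\theta_{a\meet b}$ to satisfy $x\compo a=x\compo(a\meet b)=x\compo b$ and to lie in both $\dom\theta_a$ and $\dom\theta_b$; conversely \eqref{eq:meettwisted} together with idempotency of $\meet$ gives $x\compo(a\meet b)=(x\compo a)\meet(x\compo b)=y\meet y=y$ whenever $x\compo a=x\compo b=y$, and a final application of \eqref{eq:Dtwisted} confirms $x\in\dom\theta_{a\meet b}$. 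Combining injectivity with preservation of all operations yields an embedding, and together with soundness this establishes both axiomatisation claims.
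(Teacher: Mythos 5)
Your proposal is correct, and it is essentially the approach the paper itself endorses: Theorem~\ref{thm:Drep} is quoted from the literature without an in-text proof, and the statement explicitly cites \cite{jacsto:01} and \cite{man} for precisely the Cayley (right-regular) representation $a\mapsto\{(x,\,x\compo a)\mid x\compo\D(a)=x\}$ that you construct. Your verifications check out as written --- injectivity via $\D(a)$ being the largest domain element of $\dom\theta_a$, preservation of $\compo$ via \eqref{eq:Dtwisted}, \eqref{eq:Dorder} and \eqref{eq:leftcong}, and preservation of $\meet$ via \eqref{eq:meettwisted} and \eqref{eq:meetorder} --- so your argument matches the cited proofs in substance.
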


Antidomain axioms
(in view of \eqref{eq:zero} below  we may write $0$ for $\A(x)\compo x$ (any $x$) 
and $1'$ for $\A(0)$):
\begin{align}
&\A(x)\compo x=\A(y)\compo y, \label{eq:zero}\\
&0\compo y=0, \label{eq:zero'}\\
&1'\compo y = y,\label{eq:identity}\\
&\A(x)\compo \A(y)=\A(y)\compo \A(x),\label{eq:Acomm}\\
& x\compo \A(y)=\A(x\compo y)\compo x, \label{eq:Atwisted}\\
&\alpha \compo x=\alpha \compo y\And  \A(\alpha)\compo x=
\A(\alpha)\compo y\ \Rightarrow\ x=y.\label{eq:leftunion}\\
\intertext{
An associative $\{\compo, \A\}$-algebra satisfying
\eqref{eq:zero}--\eqref{eq:leftunion} is called a 
\emph{modal restriction semigroup} \cite{jacsto:modal}.  
(Note that the law $x\compo 0=0$ assumed in \cite{jacsto:modal} 
follows from $x\compo 0=x\compo \A(x)\compo x=\A(x\compo x)\compo x\compo x=0$, 
and then  $x\compo 1'=x\compo \A(0)=\A(x\compo 0)\compo x=\A(0)\compo x=x$.)
Some consequences deduced in \cite{jacsto:modal} include:}
&\A(x)\compo \A(y)=\A(x)\compo \A(\A(x)\compo y),\label{eq:Aorder}\\
&\alpha \compo x=\alpha \compo y\And  \beta\compo x=\beta\compo y\ \Rightarrow\ 
(\alpha\vee\beta)\compo x=(\alpha\vee\beta)\compo y,\label{eq:realleftunion}\\
&\A(\alpha\compo x)\compo\A(\beta\compo x)=
\A((\alpha\vee\beta)\compo x),\label{eq:inunion}
\end{align}
where $\alpha\vee\beta:=\A(\A(\alpha)\compo\A(\beta))$
for domain elements $\alpha$ and $\beta$.

\begin{thm}\label{thm:Arep} \up(Jackson and Stokes \cite{jacsto:modal}.\up)
\begin{itemize}
\item[(1)] 
The class $\F(\compo,\A)$ is the class of modal restriction semigroups, that is, 
it is finitely axiomatised by associativity and
\eqref{eq:zero}\up{--}\eqref{eq:leftunion}.
\item[(2)] 
The class $\F(\compo, \cdot,\A)$ is finitely axiomatised by associativity, 
the semilattice axioms for~$\cdot$, \eqref{eq:meettwisted} and \eqref{eq:meetorder}
\up(where $\D:=\A\A$\up),
and the antidomain axioms \eqref{eq:zero}\up{--}\eqref{eq:leftunion}.
\end{itemize}
\end{thm}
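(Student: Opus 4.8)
The two assertions are representation theorems, so the plan splits into a routine soundness check and a representation construction, and I will concentrate on the latter. Soundness---that every algebra in $\F(\compo,\A)$ (respectively $\F(\compo,\meet,\A)$) satisfies the listed (quasi)equations, reading $\D$ as $\A\A$---follows directly from the set-theoretic definitions of the operations, and I will not dwell on it. For completeness I must show that an abstract $\mathscr S=(S,\tau)$ satisfying the axioms is isomorphic to an algebra of functions. The key preliminary observation is that \eqref{eq:zero}--\eqref{eq:leftunion} force the set $D(S)$ of domain elements to be a Boolean algebra, with meet $\alpha\compo\beta$, complement $\A$, join $\alpha\vee\beta=\A(\A(\alpha)\compo\A(\beta))$, top $1'$ and bottom $0$; this is where \eqref{eq:Acomm}, \eqref{eq:Aorder} and \eqref{eq:inunion} are used. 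In particular the $\set{\compo,\D}$-reduct is a restriction semigroup, so Theorem~\ref{thm:Drep} is available, although the construction below in fact reproves that case.

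The naive attempt is the Cayley/Trokhimenko representation on base $S$ sending $a$ to $x\mapsto x\compo a$, with domain chosen so that $\D$ becomes a partial identity; this faithfully represents $\compo$ and $\D$, but it fails for $\A$. Indeed, at a point $x$ for which $\D(x\compo a)$ is a proper nonzero part of $\D(x)$, both the candidate for $\hat a$ and the candidate for $\widehat{\A a}$ are undefined, whereas the genuine antidomain $\A(\hat a)$ must be the identity there. Repairing this ``half-defined point'' problem is the crux of the proof, and the fix is to stratify the base set by ultrafilters of $D(S)$: at each point every element will be either totally defined or totally undefined, since for an ultrafilter $U$ exactly one of $\D a\in U$ and $\A a\in U$ holds, and this is precisely what makes antidomain complementary.

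Concretely I would take as base set $X$ the classes $[s]_U$, where $U$ ranges over ultrafilters of $D(S)$, $s$ over elements with $\D s\in U$, and $[s]_U=[s']_U$ means $\gamma\compo s=\gamma\compo s'$ for some $\gamma\in U$ (germs of $s$ over the ``state'' $U$); classes indexed by distinct $U$ are kept distinct. Each $a$ acts by $\hat a([s]_U):=[s\compo a]_U$, defined precisely when $\D(s\compo a)\in U$. Using \eqref{eq:needed} and \eqref{eq:leftcong} one checks this is well defined and a homomorphism for $\compo$; using \eqref{eq:Dtwisted} one gets that $\widehat{\D a}$ is the identity on $\dom(\hat a)$; and using the antidomain twisted law \eqref{eq:Atwisted} in the form $s\compo\A a=\A(s\compo a)\compo s$ together with the ultrafilter property one obtains that $\widehat{\A a}$ is defined at $[s]_U$ exactly when $\D(s\compo a)\notin U$, i.e.\ exactly where $\hat a$ is undefined, and is the identity there---so $\widehat{\A a}=\A(\hat a)$ on the nose.

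Faithfulness is where the separation axiom \eqref{eq:leftunion} enters, and I expect it to be the main obstacle once the base set is fixed. Given $a\ne b$, the case $\D a\ne\D b$ is separated by the domain parts, so one reduces to $\D a=\D b=:\delta$ and studies $E=\set{\gamma\in D(S)\mid\gamma\compo a=\gamma\compo b}$; by \eqref{eq:leftid} and \eqref{eq:realleftunion} (a consequence of \eqref{eq:leftunion}) the set $E$ is an ideal of the Boolean algebra $D(S)$. If $\hat a=\hat b$ then every ultrafilter containing $\delta$ meets $E$, and a standard finite-intersection-property argument then forces $\delta\in E$, whence $a=\delta\compo a=\delta\compo b=b$, a contradiction. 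This gives the embedding and proves~(1). For~(2) the same base set and the same maps are used, and one adds only the verification that $\widehat{a\meet b}=\hat a\meet\hat b$: by \eqref{eq:meettwisted} and \eqref{eq:meetorder} one has $s\compo(a\meet b)=\D\bigl((s\compo a)\meet(s\compo b)\bigr)\compo(s\compo a)$, and matching this against the germ description of equality of points shows that $\widehat{a\meet b}$ is defined and equal exactly where $\hat a$ and $\hat b$ are defined and agree, with Theorem~\ref{thm:Drep}(2) underwriting the $\set{\compo,\meet,\D}$ part.
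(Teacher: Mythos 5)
Your proposal is correct, but it proves the theorem by a genuinely different route than the paper does. The paper constructs no representation here at all: for part~(1) it checks that axioms \eqref{eq:zero}--\eqref{eq:identity} give $0$ and $1'$ their usual multiplicative behaviour, so that the algebra meets \cite[Definition~3]{jacsto:modal}, and then invokes \cite[Theorem~4]{jacsto:modal}; for part~(2) it introduces the derived ``agreement'' operation $x\bowtie y=\D(x\meet y)\vee(\A(x)\meet\A(y))$, verifies \cite[Definition~19]{jacsto:modal}, invokes \cite[Theorem~20]{jacsto:modal}, and recovers the original operations via $f\cdot g=(f\bowtie g)\compo f$ and $\A(f)=0\bowtie f$. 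You instead build the representation directly, and your construction checks out: well-definedness of $[s]_U\mapsto[s\compo a]_U$ follows from $\gamma\compo\D(s\compo a)=\D(\gamma\compo s\compo a)$ (an instance of \eqref{eq:needed}); the exact complementarity $\widehat{\A a}=\A(\hat a)$ uses \eqref{eq:Atwisted} together with the ultrafilter dichotomy (plus the derived law $\A(x)=\A(\D(x))$, which you should at least flag); your ideal $E=\set{\gamma\mid\gamma\compo a=\gamma\compo b}$ is join-closed precisely by \eqref{eq:realleftunion}, and the finite-intersection argument does force $\delta\leq\gamma_1\vee\dots\vee\gamma_n\in E$, hence $a=\delta\compo a=\delta\compo b=b$; and the $\meet$ verification goes through since $\mu:=\D((s\compo a)\meet(s\compo b))$ satisfies $\mu\compo s\compo a=\mu\compo s\compo b=s\compo(a\meet b)$ by \eqref{eq:meettwisted}--\eqref{eq:meetorder}. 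What each route buys: the paper's citation argument is short and, via \cite[Theorem~20]{jacsto:modal}, immediately yields the remark following the theorem that $\F(\compo,\cdot,\A)$ is a finitely based \emph{variety}, which your quasiequational argument does not address; your route is self-contained and, notably, anticipates the paper's own Section~4 machinery---your germs $[s]_U$ are in essence the ultrasubsets $\upset(\Delta\compo s)$ of Lemma~\ref{lem:allultra}, and your ideal $E$ mirrors the ideal $I=\set{\alpha\mid\alpha\compo s\leq\alpha\compo t}$ in the proof of Theorem~\ref{thm:main}---at the cost of taking on trust several derived facts (the Boolean structure of $D(S)$, and the restriction-semigroup laws \eqref{eq:leftid}--\eqref{eq:Dtwisted} for $\D=\A\A$) whose syntactic derivations the paper outsources to \cite{jacsto:modal}.
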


\begin{proof}  
The proof is essentially covered in~\cite{jacsto:modal} but the signatures used there 
are slightly different, so some translation is needed.  
For each part, the axioms are easily verified in $\F(\compo, \A)$ and $\F(\compo, \cdot, \A)$,
respectively.  
Conversely, for the first part, let $(S, \compo, \A)$ be any associative algebra satisfying
\eqref{eq:zero}--\eqref{eq:leftunion}.  
Axioms \eqref{eq:zero}--\eqref{eq:identity} prove that  $0, 1'$ have the usual multiplicative
properties and so $(S, \compo, \A,0,1')$ satisfies the conditions 
of~\cite[Definition~3]{jacsto:modal}, hence by~\cite[Theorem~4]{jacsto:modal} 
it is isomorphic to a member of $\F(\compo, \A)$, as required.  
For the second part, let $(S, \compo)$ be associative, let $(S, \meet)$ be a  semilattice
and $(S, \compo, \meet, \A)$ satisfy \eqref{eq:meettwisted}--\eqref{eq:leftunion}.  
Define a binary operation~$\bowtie$ by $x\bowtie y=\D(x\meet y)\vee(\A(x)\meet\A(y))$.  
The intended meaning of $x\bowtie y$ is the identity function over the points 
where $x$ and $y$ do not disagree.  It is not hard to check  that $(S, \compo, \bowtie, 0)$
satisfies \cite[Definition~19]{jacsto:modal}, hence by~\cite[Theorem~20]{jacsto:modal} 
it is isomorphic to an algebra of functions where $f\bowtie g$ is the identity restricted 
to the points where $f$ and $g$ either agree or are both undefined.  
From this we can recover a representation of $(S, \compo, \cdot, \A)$ using 
$f\cdot g=(f\bowtie g)\compo f$ and $\A(f)=0\bowtie f$.
\end{proof}

It follows from \cite[Theorem 20]{jacsto:modal} that $\F(\compo, \cdot,\A)$ 
is a finitely based variety, 
and the implicational law \eqref{eq:leftunion} can be expressed equationally.

Range axioms:
\begin{align}
&x\compo \R(x)=x,\label{eq:rightid}\\
&\R(x)\compo \R(y)=\R(y)\compo \R(x),\label{eq:Rcomm}\\
&\R(\R(x))=\R(x),\\
&\R(x\compo y)\compo \R(y)=\R(x\compo y),\label{eq:Rorder}\\
&\R(\R(x)\compo y)=\R(x\compo y),\label{eq:rightcong}\\
&x\compo y=x\compo z\ \Rightarrow\  \R(x)\compo y=\R(x)\compo z.\label{eq:schein}
\end{align} 

Domain--range axioms:
\begin{align}
&\D(\R(x))=\R(x) \text{ and } \R(\D(x))=\D(x).\label{eq:DR}
\end{align}

Antirange axiom:
\begin{align}
&\R(\alpha\compo x)\vee\R(\beta\compo x)=\R((\alpha\vee\beta)\compo x).\label{eq:antirange}
\end{align}
The antirange axiom does not seem to have played a role in previous works; 
after an application of DeMorgan's Law it is an ``antirange'' dual to law~\eqref{eq:inunion}.

\begin{thm}\label{thm:DRrep}
\begin{itemize}
\item[(1)] 
\up(Schein \cite{sch:DR}.\up) 
The class $\F(\compo, \D, \R)$ is finitely axiomatised by associativity\up, 
\eqref{eq:leftid}\up{--}\eqref{eq:Dtwisted}
and \eqref{eq:rightid}\up{--}\eqref{eq:DR}.
\item[(2)] 
\up(Jackson and Stokes \cite{jacsto:DR}.\up) 
The class $\F(\compo, \cdot, \D, \R)$ is finitely axiomatised by associativity\up,
\eqref{eq:leftid}\up{--}\eqref{eq:Dtwisted}\up,
the semilattice axioms for $\cdot$ and \eqref{eq:meettwisted} and \eqref{eq:meetorder}, 
the range axioms~\eqref{eq:rightid}\up{--}\eqref{eq:rightcong}\up, 
and the domain--range axioms~\eqref{eq:DR}\up.
\end{itemize}
If $0$ is added to the signature then the laws $0\compo x=x\compo 0=\D(0)=0$ ensure 
that~$0$ can be correctly represented as well.
\end{thm}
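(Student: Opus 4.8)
The plan is to prove soundness and completeness separately, with completeness being the substantial direction. Soundness is routine: each of \eqref{eq:leftid}--\eqref{eq:Dtwisted}, \eqref{eq:rightid}--\eqref{eq:DR} (and, for part~(2), the semilattice laws together with \eqref{eq:meettwisted} and \eqref{eq:meetorder}) is verified directly against the set-theoretic definitions of $\compo$, $\D$, $\R$ and $\meet$ given in Section~2, so every member of $\F(\compo,\D,\R)$, respectively $\F(\compo,\meet,\D,\R)$, satisfies the stated laws. For completeness I would take an abstract algebra $\SS=(S,\tau)$ satisfying the axioms and construct an isomorphic algebra of partial functions.

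The first step is to record the structure of the idempotents. Writing $D(S)=\set{s:\D(s)=s}$ and $R(S)=\set{s:\R(s)=s}$, the domain--range laws \eqref{eq:DR} show that $D(S)=R(S)$; call this common set $E$. Using \eqref{eq:Dcomm}, \eqref{eq:Rcomm} and the restriction-semigroup laws one checks that $E$ is a meet-semilattice under $\compo$, ordered by $\alpha\le\beta\iff\alpha\compo\beta=\alpha$, and that the elements of $E$ are precisely the candidates to be represented as partial identities. The intended representation assigns to each $\alpha\in E$ a subset of the base and to each $s\in S$ a partial function whose domain-identity codes $\D(s)$ and whose range-identity codes $\R(s)$.

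The core construction is a Cayley/Wagner--Preston-style representation on a suitable base set $X$, following Schein, and I expect its main obstacle to be exactly the faithful representation of range. The naive right-translation representation $s\mapsto(x\mapsto x\compo s)$ already handles composition and domain correctly --- this is essentially the content of Theorem~\ref{thm:Drep}(1) --- but it does not see range, because the image of a right translation on $S$ need not correspond to $\R(s)$, and distinct elements acting identically on $E$ must still be separated. I would therefore take $X$ to refine $E$ (in Schein's representation this forces an infinite base even when $S$ is finite), with points chosen so that each $s$ induces a genuine partial function $\hat s$ with $\R(\hat s)$ equal to the subset coding $\R(s)$ and with $s\mapsto\hat s$ injective. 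The axioms doing the work are the range-order law \eqref{eq:Rorder}, the congruence law \eqref{eq:rightcong}, and above all the Schein law \eqref{eq:schein}, namely $x\compo y=x\compo z\Rightarrow\R(x)\compo y=\R(x)\compo z$, which guarantees that the right-translation action is well defined on the range-refined points; injectivity then follows from \eqref{eq:leftid} and \eqref{eq:rightid}. This is where the delicate design of $X$ is needed and where the infinite base enters.

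For part~(2) I would start from the representation produced in part~(1) for the $(\compo,\D,\R)$-reduct and show, following Jackson and Stokes, that the same base simultaneously represents $\meet$ as genuine intersection of partial functions. The semilattice laws make $\meet$ a meet operation, the law \eqref{eq:meetorder}, $x\meet y=\D(x\meet y)\compo x$, forces $\widehat{x\meet y}$ to agree with $\hat x$ throughout its domain, and \eqref{eq:meettwisted} controls the interaction with composition; together these identify $\widehat{x\meet y}$ with $\hat x\cap\hat y$ on the constructed base. Finally, when $0$ is adjoined, the laws $0\compo x=x\compo 0=\D(0)=0$ force $\hat 0$ to be the empty function, so $0$ is correctly represented.
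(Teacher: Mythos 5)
This theorem is quoted from the literature (Schein \cite{sch:DR}, Jackson and Stokes \cite{jacsto:DR}); the paper itself gives no proof, but it recalls Schein's actual construction in Section 6, and comparing your outline against that construction exposes the gap. You correctly identify the landmarks: soundness is routine, completeness goes via a Schein-style representation with an infinite base even for finite algebras, \eqref{eq:schein} is the law that makes the construction well defined, and part (2) extends the same representation to $\meet$ following \cite{jacsto:DR}. But at the decisive point you defer rather than prove. Schein's base is not an abstractly ``range-refined'' set of points acted on by right translation: it is the set of reduced \emph{permissible sequences} $(a_0,b_0,\dots,a_n,b_n,a_{n+1})$ with $\R(a_i)=\R(b_i)$ and $\D(b_i)=\D(a_{i+1})$, on which $a$ acts by $\bar a\mapsto\nf(a_0,b_0,\dots,b_n,a_{n+1}\compo a)$ when $\R(a_{n+1})\leq\D(a)$, with \eqref{eq:schein} guaranteeing precisely that reduction to normal form is unique. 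The zig-zag shape of the sequences is what makes $\R$ correct: to show every point in the represented $\R(a)$ is genuinely an image under $\hat a$, one manufactures a preimage by \emph{extending} the sequence with a new pair, as in the paper's computation where $a^\theta$ sends $\nf(a_0,\dots,a_{n+1},\,a\compo\R(a_{n+1}),\,\D(a\compo\R(a_{n+1})))$ back to $(a_0,\dots,a_{n+1})$. A right-translation action on any fixed refinement of the idempotents has no mechanism for producing such preimages, which is exactly why your sentence ``this is where the delicate design of $X$ is needed'' names the crux of the theorem and then skips it. As it stands, the completeness direction is a plan, not a proof.

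There is a second, more localized gap in your part (2). The axiom list there deliberately omits \eqref{eq:schein} (the range axioms stop at \eqref{eq:rightcong}), so you cannot simply ``start from the representation produced in part (1) for the $(\compo,\D,\R)$-reduct'': the reduct is not known to satisfy the part (1) hypotheses until \eqref{eq:schein} is \emph{derived} from the meet axioms \eqref{eq:meettwisted} and \eqref{eq:meetorder} together with the rest. This derivation is nontrivial --- it is \cite[Lemma~9.8]{jacsto:DR}, flagged in the paper immediately after the theorem, and it is what makes $\F(\compo,\cdot,\D,\R)$ a variety rather than a proper quasivariety. Likewise, your claim that the semilattice laws, \eqref{eq:meetorder} and \eqref{eq:meettwisted} ``together identify $\widehat{x\meet y}$ with $\hat x\cap\hat y$'' is an assertion, not an argument: verifying that Schein's representation sends $\meet$ to genuine intersection of functions on the sequence base occupies a substantial portion of \cite{jacsto:DR} and cannot be waved through. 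Only the final claim about $0$ (that $0\compo x=x\compo 0=\D(0)=0$ forces $\hat 0=\varnothing$) is correctly handled at the level of detail you give.
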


Law \eqref{eq:schein} is omitted from Theorem~\ref{thm:DRrep}(2) 
because it is shown in \cite[Lemma 9.8]{jacsto:DR} to be redundant 
in the presence of the other axioms for the signature $\{\compo, \cdot, \D, \R\}$. 
In particular, $\F(\compo, \cdot, \D, \R)$ is a variety.

In the next section we will give finite axiomatisations of the classes 
$\F(\compo, \A, \R)$ and $\F(\compo, \cdot, \A, \R)$  
using suitable selections from the above axioms.

Finally we consider axioms for preferential union and maximal iterate.
We have the following two laws
\begin{align}
\D(x)\compo (x\sqcup y)=x,&\label{eq:sqcupdom}\\
\A(x)\compo (x\sqcup y)=\A(x)\compo y.\label{eq:uniondef}
\end{align}
As mentioned in the introduction, iteration is far more elusive.  
We list some axioms and Theorem~\ref{thm:cupmi} will state the extent to which they are 
known to be complete:
\begin{align}
\D(x)\compo x^\uparrow=x\compo x^\uparrow&\ \text {and }\ 
\A(x)\compo x^\uparrow = \A(x),\label{eq:mibasic}\\
\D(x)\compo y=\D(x)\compo y\compo \D(x)\ &\Rightarrow\ 
\D(x)\compo y^\uparrow=\D(x)\compo y^\uparrow\compo \D(x).\label{eq:dynamic}
\end{align}

\begin{thm}\label{thm:cupmi} 
\begin{itemize}
\item[(1)] \up(Jackson and Stokes \cite[\S3.3]{jacsto:modal}.\up) 
The class $\F(\compo, \A, \sqcup)$ is characterised by the laws characterising the class
$\F(\compo,\A)$ together with laws \eqref{eq:sqcupdom} and \eqref{eq:uniondef}. 
Moreover, when laws \eqref{eq:sqcupdom} and \eqref{eq:uniondef} hold, 
every representation in the reduct signature $\F(\compo,\A)$ correctly represents~$\sqcup$.
\item[(2)] \up(Jackson and Stokes \cite{jacsto:modal}.\up)  
The finite
members of $\F(\compo, \A, \sqcup,{}^\uparrow)$ are characterised by the set of laws
characterising the signature $\{\compo,\A,\sqcup\}$ together with laws \eqref{eq:mibasic} 
and \eqref{eq:dynamic}.  
Moreover, when laws \eqref{eq:mibasic} and \eqref{eq:dynamic} hold, 
every representation in the reduct signature $\{\compo,\A,\sqcup\}$ correctly 
represents~${}^\uparrow$ for finite algebras.
\end{itemize}
\end{thm}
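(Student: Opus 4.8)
The plan is to treat both parts by a single two-step template: first invoke the representation theorem for the next-smaller signature to obtain a representation $\theta$ of the reduct, and then check that $\theta$ already represents the new operation correctly, with the two defining laws doing all the work. For part~(1) I would start from a $\{\compo,\A,\sqcup\}$-algebra $S$ satisfying the laws of $\F(\compo,\A)$ together with \eqref{eq:sqcupdom} and \eqref{eq:uniondef}. Soundness of these two laws in $\F(\compo,\A,\sqcup)$ is immediate from the definition of preferential union, so the content lies in the ``Moreover'' clause. By Theorem~\ref{thm:Arep}(1) the $\{\compo,\A\}$-reduct has a representation $\theta$; since $\theta$ interprets $\D$ and $\A$ as genuine domain and antidomain restrictions and $\compo$ as functional composition, I would fix $x,y\in S$ and compare $\theta(x\sqcup y)$ with the preferential union of $\theta(x)$ and $\theta(y)$ pointwise. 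For a base point $p\in\dom(\theta(x))$, law \eqref{eq:sqcupdom} reads $\D(\theta(x))\compo\theta(x\sqcup y)=\theta(x)$ and forces $\theta(x\sqcup y)(p)=\theta(x)(p)$; for $p\notin\dom(\theta(x))$, law \eqref{eq:uniondef} forces $\theta(x\sqcup y)(p)=\theta(y)(p)$. These two cases exhaust the base set and exactly match the definition of $\sqcup$, so $\theta$ represents $\sqcup$ correctly.

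For part~(2) I would run the same template one level higher, starting from a \emph{finite} algebra $S$ satisfying the $\{\compo,\A,\sqcup\}$-laws together with \eqref{eq:mibasic} and \eqref{eq:dynamic}, and using the representation $\theta$ of the $\{\compo,\A,\sqcup\}$-reduct supplied by part~(1). Writing $f=\theta(x)$ and $g=\theta(x^\uparrow)$, the goal is $g=f^\uparrow$, again verified pointwise. The easy half concerns points whose $f$-orbit leaves $\dom(f)$ after finitely many steps: the first identity of \eqref{eq:mibasic}, $\D(f)\compo g=f\compo g$, propagates the value of $g$ along the orbit, while the second, $\A(f)\compo g=\A(f)$, anchors it to the identity at the terminating antidomain point, so $g(p)=f^\uparrow(p)$ by a finite chaining argument.

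The delicate half, and the place where both \eqref{eq:dynamic} and the finiteness of $S$ are indispensable, is the set of ``trapped'' points whose $f$-orbit never leaves $\dom(f)$, where $f^\uparrow(p)$ is undefined by its infinite-join semantics. Here I would exploit finiteness algebraically: the domain elements $\D(x^n)$ form a descending chain in the finite semilattice of domain elements, so they stabilise at some $d_\infty=\D(x^N)=\D(x^{N+1})=\cdots$ with $N\ge 1$ and $d_\infty\le\D(x)$, and under $\theta$ the restriction $\theta(d_\infty)$ is exactly the set of trapped points. Stabilisation gives $d_\infty\compo x=d_\infty\compo x\compo d_\infty$, that is, $d_\infty$ is $x$-invariant, so \eqref{eq:dynamic} applies and yields $d_\infty\compo x^\uparrow=d_\infty\compo x^\uparrow\compo d_\infty$: the range of $g$ on trapped points stays within the trapped set. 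Combining this with the fact that an iterate can halt only where $x$ is undefined---so the range of $x^\uparrow$ is disjoint from $\D(x)$, hence from $d_\infty\le\D(x)$---forces $d_\infty\compo x^\uparrow=0$: this range is simultaneously inside and outside $d_\infty$, hence empty, so $g$ is undefined on precisely the trapped points, matching $f^\uparrow$.

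I expect the trapped case to be the main obstacle. Without finiteness the chain $\D(x^n)$ need not stabilise, so ``trapped'' has no finitary algebraic witness and the infinite join defining $x^\uparrow$ can fail to be captured; moreover \eqref{eq:mibasic} alone admits spurious fixed points of the unfolding recursion on an invariant region, since \emph{any} self-map of that region satisfies it, and it is exactly \eqref{eq:dynamic} that excludes them by imposing invariance of $x^\uparrow$. Pinning down that the range of $x^\uparrow$ avoids $\D(x)$---equivalently $x^\uparrow\compo x=0$---is the subtle algebraic point underlying this last step, and is what finally collapses $d_\infty\compo x^\uparrow$ to $0$.
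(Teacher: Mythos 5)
The paper offers no internal proof of Theorem~\ref{thm:cupmi}: it is stated as an attributed result, with the work deferred to Jackson and Stokes \cite{jacsto:modal}, so your attempt can only be judged on its own merits. Your part~(1) is correct and is the standard argument: soundness of \eqref{eq:sqcupdom} and \eqref{eq:uniondef} is immediate, and the pointwise two-case check (on $\dom(\theta(x))$ law \eqref{eq:sqcupdom} forces $\theta(x\sqcup y)$ to agree with $\theta(x)$; off it, law \eqref{eq:uniondef} forces agreement with $\theta(y)$) does show any $\{\compo,\A\}$-representation automatically represents $\sqcup$. In part~(2) your scaffolding is also sound: the chaining argument for non-trapped points from \eqref{eq:mibasic}, the stabilisation $d_\infty=\D(x^N)$ in a finite algebra, the identity $d_\infty\compo x=d_\infty\compo x\compo d_\infty$ (which follows from \eqref{eq:Dtwisted} via $x\compo\D(x^N)=\D(x^{N+1})\compo x=d_\infty\compo x$), and the application of \eqref{eq:dynamic} to get $d_\infty\compo x^\uparrow=d_\infty\compo x^\uparrow\compo d_\infty$ are all fine.

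The genuine gap is the step you yourself flag but never discharge: that the range of $x^\uparrow$ avoids $\D(x)$, equivalently $x^\uparrow\compo x=0$. This is not a consequence of the laws you allow yourself, as the following finite structure shows. Take the two-element function algebra $\{0,1'\}$ on a one-point base (so the $\{\compo,\A,\sqcup\}$-reduct satisfies every law of $\F(\compo,\A,\sqcup)$), and interpret ${}^\uparrow$ by $0^\uparrow=1'$ (the correct value) and, spuriously, $1'^\uparrow=1'$ instead of $0$. Both halves of \eqref{eq:mibasic} hold: $\D(1')\compo 1'^\uparrow=1'=1'\compo 1'^\uparrow$ and $\A(1')\compo 1'^\uparrow=0=\A(1')$; and \eqref{eq:dynamic} holds trivially, since its conclusion $\D(x)\compo y^\uparrow=\D(x)\compo y^\uparrow\compo\D(x)$ is automatic when $y^\uparrow=1'$ always. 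Yet $1'^\uparrow\compo 1'=1'\neq 0$, and ${}^\uparrow$ is misrepresented on the (totally trapped) base point. This confirms your own diagnosis that \eqref{eq:mibasic} admits spurious fixed points on invariant regions, but it also shows that \eqref{eq:dynamic} does \emph{not} exclude them by itself: your final collapse of $d_\infty\compo x^\uparrow$ to $0$ requires an additional valid law, such as $x^\uparrow\compo\A(x)=x^\uparrow$ (which fails in the structure above), and this must be secured from the fuller treatment in \cite{jacsto:modal} rather than deduced from \eqref{eq:mibasic} and \eqref{eq:dynamic} alone. With that law in hand, your argument closes exactly as you describe: $d_\infty\leq\D(x)$ gives $x^\uparrow\compo d_\infty=x^\uparrow\compo\D(x)\compo d_\infty=0$, whence $d_\infty\compo x^\uparrow=d_\infty\compo x^\uparrow\compo d_\infty=0$.
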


More is true: in part~(1),  the implication \eqref{eq:realleftunion} used in the 
characterisation of $\F(\compo, \A)$ becomes redundant \cite[Proposition 15]{jacsto:modal}, 
while in part~(2), the implication \eqref{eq:dynamic} can be replaced by the less 
intuitive equational law 
$\D(x)\compo \A(y\compo \A(x))\compo  (y\compo  \A(\D(x)\compo y\compo \A(x))^\uparrow \compo \A(x)=0$ 
\cite[Proposition 29]{jacsto:modal}.  
Thus in both cases one can obtain purely equational axioms.  

\renewcommand{\theequation}{\arabic{equation}}
\section{Characterisation of semigroups of functions with range and antidomain}\label{sec:char}
We are ready to state our main finite axiomatisability results.

\begin{thm}\label{thm:main}
The classes $\F(\compo, \A, \R)$ and $\F(\compo, \cdot, \A, \R)$ 
are finitely axiomatizable.
\begin{enumerate}
\item 
An algebra $(S,\compo,\A,\R)$ is representable 
as an algebra of functions if and only if
it is a modal restriction semigroup 
satisfying  laws
\eqref{eq:rightid}--
\eqref{eq:antirange}.  
The class of representable algebras is a proper quasivariety.
\item   
An algebra $(S, \compo, \cdot, \A, \R)$ is representable 
as an algebra of functions if and only if both $(S, \compo, \cdot,\A)$  and 
$( S,\compo,\cdot,\D,\R)$ are representable as algebras of functions 
and the law \eqref{eq:antirange} holds.
The class of representable algebras is a variety, it is axiomatised by: 
associativity, \eqref{eq:leftid}\up{--}\eqref{eq:Dtwisted}\up,
semilattice axioms for~$\cdot$, 
\eqref{eq:meettwisted}\up{--}
\eqref{eq:leftunion} and 
\eqref{eq:rightid}\up{--}
\eqref{eq:antirange}.
\end{enumerate}
With the additional axioms \eqref{eq:sqcupdom} and \eqref{eq:uniondef}\up, 
both items~(1) and~(2) extend to include a complete axiomatisation for 
functionally representable algebras with~$\sqcup$ in the signature.  
With axioms \eqref{eq:mibasic} and \eqref{eq:dynamic} adjoined 
these characterisations extend to include maximal iterate in the case of finite algebras.
\end{thm}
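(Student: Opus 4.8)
The plan is to separate soundness from completeness and, for completeness, to reduce everything to a single construction for the signature $\set{\compo,\A,\R}$ that is afterwards specialised. For soundness I would verify each listed law directly in a function algebra. The modal restriction semigroup laws, the range laws and the meet laws are routine, so the only genuinely new check is the antirange law~\eqref{eq:antirange}: for $\alpha,\beta\in D(S)$ the element $(\alpha\vee\beta)\compo x$ is $x$ restricted to $\fix(\alpha)\cup\fix(\beta)$, and its range is the identity on $\set{x(p)\mid p\in\fix(\alpha)\cup\fix(\beta)}$, which distributes over the union to give exactly $\R(\alpha\compo x)\vee\R(\beta\compo x)$.

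For the completeness half of~(1), note first that the $\set{\compo,\A}$-reduct is a modal restriction semigroup, so Theorem~\ref{thm:Arep}(1) applies, and that the $\set{\compo,\D,\R}$-reduct satisfies Schein's axioms, since the domain laws follow from the antidomain laws and~\eqref{eq:rightid}--\eqref{eq:DR} are assumed, so Theorem~\ref{thm:DRrep}(1) applies. Neither reduct representation alone respects both $\A$ and $\R$, so the aim is to build a single base set $X$ carrying both, which I would do by a step-by-step saturation. Seed $X$ with points separating the domain elements $D(S)$; then close under two demands: whenever an element $s$ is defined at a point $p$, adjoin a target point realising $s(p)$, forcing the concrete range of the represented $s$ to coincide with the set of points fixed by the represented $\R(s)$; and adjoin enough points that the represented $1'$ is the full identity on $X$, so that $\A(s)$ is fixed exactly off the domain of $s$. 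Schein's method is invoked precisely in the construction of the target points for the range demand. Faithfulness on the antidomain side is delivered by the quasi-equation~\eqref{eq:leftunion}, and on the range side by the Schein condition~\eqref{eq:schein}.

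The hardest step will be keeping these two kinds of points coherent across the Boolean structure that $\A$ imposes on $D(S)$. When two domain elements are joined as $\alpha\vee\beta=\A(\A(\alpha)\compo\A(\beta))$, the target points attached to $(\alpha\vee\beta)\compo x$ must be exactly the union of those attached to $\alpha\compo x$ and to $\beta\compo x$; were this to fail, either functionality or the faithful representation of $\R$ would break at the next stage. This is exactly the content of the antirange law~\eqref{eq:antirange}, and I expect the real work of the proof to lie in checking that each stage of the saturation preserves this coherence invariant, so that the limit is a genuine $\set{\compo,\A,\R}$-algebra of functions isomorphic to $S$.

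Finally, part~(2) and the extensions should follow with little extra effort. Given the two reduct representations of~(2), Theorems~\ref{thm:Arep}(2) and~\ref{thm:DRrep}(2) guarantee that $S$ satisfies all the antidomain, meet, domain--range and range laws, so together with~\eqref{eq:antirange} the $\set{\compo,\A,\R}$-reduct meets the hypotheses of~(1); running the construction above yields a base $X$, and one then checks, using~\eqref{eq:meettwisted} and~\eqref{eq:meetorder}, that $\meet$ is automatically represented as genuine intersection, giving a full representation. Because in the presence of $\meet$ the implication~\eqref{eq:leftunion} can be rewritten equationally and~\eqref{eq:schein} is redundant by~\cite[Lemma~9.8]{jacsto:DR}, the resulting class is a variety, whereas for~(1) the essential use of these quasi-equations (witnessed, as in the $\set{\compo,\D,\R}$ case, by a representable algebra with a non-representable quotient) leaves only a proper quasivariety. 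For the extensions no new representation is needed: once $\compo$ and $\A$ are faithfully represented, Theorem~\ref{thm:cupmi} shows that laws~\eqref{eq:sqcupdom} and~\eqref{eq:uniondef} force the same $X$ to represent $\sqcup$, and laws~\eqref{eq:mibasic} and~\eqref{eq:dynamic} force it to represent ${}^\uparrow$ for finite algebras.
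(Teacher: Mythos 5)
Your soundness check and your treatment of the extensions via Theorem~\ref{thm:cupmi} agree with the paper, but the completeness core of your proposal is a different construction (step-by-step saturation) whose decisive steps are asserted rather than carried out, and the decisive step is exactly where the real work lies. The crux is the range back-witness demand: when $(y,y)$ lies in the represented $\R(s)$, you must adjoin a point $x$ with $(x,y)$ in the represented $s$, and to keep the limit functional and to keep $\A$ correct you must simultaneously decide, for the new point $x$, which domain elements hold there --- that is, you must produce an ultrafilter $\Gamma$ of the boolean algebra $D(S)$ with $0\notin\Gamma\compo s$ whose image under $\R$ generates precisely the ultrafilter $\Delta$ sitting at $y$. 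Producing such a $\Gamma$ is where the antirange law actually enters: the paper shows that $F=\{\A(\alpha)\mid \R(\alpha\compo s)\notin\Delta\}$ generates a proper filter because $\R(\alpha\compo s)\vee\R(\beta\compo s)=\R((\alpha\vee\beta)\compo x)$ combined with primeness of $\Delta$ gives $\A(\alpha\vee\beta)\in F$, and then extends $F$ to $\Gamma$. You name~\eqref{eq:antirange} as the ``coherence invariant'' of your saturation but give no argument that a range demand can always be met consistently with it; this is the heart of the proof, not a deferred verification. Note also that the paper does not saturate at all: it forms $\mathscr{S}^\flat$, whose nonzero elements are the ultrasubsets $\upset(\Delta\compo s)$, applies the known representation theorem (Theorem~\ref{thm:DRrep}) wholesale to $\mathscr{S}^\flat$, and recovers $\A$ by the union $\phi^\sharp(s)=\bigcup\{\phi(\upset(\Delta\compo s))\mid 0\notin\Delta\compo s\}$, with functionality resting on the fact that distinct ultrafilters yield representing functions with disjoint domains; similarly, faithfulness is not an immediate consequence of~\eqref{eq:leftunion} but requires showing $I=\{\alpha\in D(S)\mid \alpha\compo s\leq \alpha\compo t\}$ is a boolean ideal (via the consequence~\eqref{eq:realleftunion}) and extending the principal filter of $\D(s)$ to an ultrafilter avoiding $I$.

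For part~(2) there is a concrete error: $\meet$ is not ``automatically represented as genuine intersection'' by whatever representation part~(1) produces. Since the natural order is represented as $\subseteq$, you always get $\phi(x\meet y)\subseteq\phi(x)\cap\phi(y)$, but the reverse inclusion can fail for a representation built without reference to $\meet$: nothing forces a point at which $\phi(x)$ and $\phi(y)$ happen to agree to lie in the domain of $\phi(x\meet y)$, and laws \eqref{eq:meettwisted} and \eqref{eq:meetorder} constrain the abstract algebra, not an arbitrary meet-free representation of it. The paper avoids this by representing $\mathscr{S}^\flat$ from the outset in the signature containing meet, using Theorem~\ref{thm:DRrep}(2), and then proving that $\phi^\sharp$ preserves $\meet$ via Lemma~\ref{lem:ultrasubsetfacts}(iv); the key point there is that $0\notin(\Delta\compo s)\meet(\Gamma\compo t)$ forces $\Delta=\Gamma$, so the union of pairwise intersections collapses to $\phi^\sharp(s\meet t)$. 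Any saturation argument in the style you propose would need an analogous meet-aware demand built into every stage, together with a proof that it is jointly satisfiable with the range and antidomain demands.
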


The proof of Theorem~\ref{thm:main} takes the following form.    
First we note that \cite[Proposition~11]{jacsto:modal} states that $\F(\compo, \A, \R)$ 
is a proper quasivariety, and it follows from \cite[Theorem~20]{jacsto:modal} that 
$\F(\compo, \cdot, \A, \R)$ is a variety.
For each case, the stated axioms are routinely seen to be valid.
Indeed, with the possible exception of \eqref{eq:antirange}, 
they consist of combining known axiomatisations for representability in various 
fragments of the signatures.
Aside from \eqref{eq:antirange}, the set of axioms stated in the first part 
is the union of an axiomatisation of  $\F(\compo,\D,\R)$ and an axiomatisation of 
$\F(\compo,\A)$.
Similarly the set of axioms implicit in the second part 
is the union of an axiomatisation of $\F(\compo, \cdot,\A)$ and 
an axiomatisation of $\F(\compo, \cdot, \D,\R)$.     
Hence in each case the stated axioms are valid over the class.  
The final two sentences of Theorem~\ref{thm:main} are a direct corollary of the ``moreover'' 
statements in Theorem~\ref{thm:cupmi}.  The comments after Theorem~\ref{thm:cupmi} 
show that in these settings one may obtain axiomatisations that are purely equational.

We demonstrate a faithful representation 
for any algebra satisfying the stated axioms. 
The construction is essentially the same in both instances.  
For a given algebra $\mathscr{S}=( S,\compo,\A,\R)$ satisfying the stated axioms, 
we construct a new algebra $\mathscr{S}^\flat$, 
and show that it satisfies the axioms for representability 
as an algebra of functions with composition, domain and range (but not with antidomain).  
If $\mathscr{S}$ also carries a semilattice satisfying the stated axioms, 
then $\mathscr{S}^\flat$ will also carry a semilattice
and satisfy the axioms for representability as an algebra of functions 
with composition, domain, range and intersection.  
We then show how to represent elements of $( S,\compo,\A,\R)$ 
as a union of elements in the representation of 
$\mathscr{S}^\flat$ (so that antidomain is preserved as well). 

Any restriction semigroup $(S,\compo,\D)$ carries a natural order relation defined by 
$x\leq y$ iff $x=\D(x)\compo y$, or equivalently, iff $\exists z\ x=\D(z)\compo y$.  
This natural order coincides with the relation 
\begin{equation*}
\set{(x\compo\alpha\compo y, \; x\compo y)\mid \alpha=\D(z), \; x, y, z\in S},
\end{equation*}
is stable under left and right multiplication and is represented as $\subseteq$ in 
$\{\compo,\D\}$-representations  
(these properties are routine syntactic consequences of axioms
\eqref{eq:leftid}--\eqref{eq:needed}, and can alternatively be deduced from 
Theorem~\ref{thm:Drep}(1); see~\cite{jacsto:01} or~\cite{man} for example).
Any model of the axioms in Theorem~\ref{thm:main} is a restriction semigroup 
with respect to the derived operation $\D$, and the corresponding natural order 
is used to construct~$S^\flat$.

In the case of a finite algebra $( S,\compo,\A,\R)$, 
the idea is quite straightforward: 
the set~$S^\flat$ can be interpreted as the union of~$\{0\}$ 
along with the set of atoms in the natural order~$\leq$.  
In general, we use an ultrafilter construction.

Throughout, we consider a fixed algebra 
$\mathscr{S}=(S,\compo,\A,\R)$ 
(or $( S,\compo,\cdot,\A,\R)$) satisfying the appropriate axioms 
from Theorem~\ref{thm:main}.  

\begin{lemma}\label{lem:BA}
\begin{enumerate}
\item
$( D(S), \compo, \A,0,1')$ forms a boolean algebra where $0, 1'$ 
are the bottom and top elements, respectively,  $\compo$ is boolean meet 
and $\A$ is boolean complement.
\item 
$\leq$ is an order relation \up(reflexive, transitive, antisymmetric\up). \label{part:mon}
For $s, t, u\in S$, if $s\leq t$ then 
$s\compo u\leq t\compo u,\; u\compo s\leq u\compo t$ and $\D(s)\leq \D(t)$.
\item\label{part:=}  
For $s, t\in S$, if $s\leq t$ and $\D(s)=\D(t)$ then $s=t$.
\item\label{part:vee}  
For $s, t\in S$ and $\alpha, \beta\in D(S)$, if $s\geq\alpha\compo t$ and 
$s\geq\beta\compo t$ then $s\geq(\alpha\vee\beta)\compo t$.
\item\label{part:vee2}  
For $s\in S$ and $\alpha, \beta\in D(S)$,
$\alpha\compo s\leq(\alpha\vee\beta)\compo s$ and
$\beta\compo s\leq(\alpha\vee\beta)\compo s$.
\end{enumerate}
\end{lemma}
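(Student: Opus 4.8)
The plan is to exploit that every assertion in the lemma is already expressible in the $\{\compo,\A\}$-reduct of $\mathscr S$: we have $\D=\A\A$, the constants $0=\A(x)\compo x$ and $1'=\A(0)$, the join $\alpha\vee\beta=\A(\A(\alpha)\compo\A(\beta))$, and the order $x\leq y$ iff $x=\D(x)\compo y$, all term-definable from $\compo$ and $\A$. Since $\mathscr S$ is a modal restriction semigroup, Theorem~\ref{thm:Arep}(1) supplies a faithful representation $\theta$ of $(S,\compo,\A)$ as an algebra of functions, and I would use this both directly (for part~(1)) and as a guiding picture for the remaining parts, which I will in any case verify by short syntactic computations.

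For part~(1) I would first record the purely formal facts. The structure $(D(S),\compo)$ is a commutative idempotent monoid: commutativity is \eqref{eq:Dcomm}, idempotence follows from \eqref{eq:needed} and \eqref{eq:leftid}, the constant $1'$ is a two-sided identity by \eqref{eq:identity} and the derived identity $x\compo 1'=x$, and $0$ is least by \eqref{eq:zero'}; thus $\compo$ is a meet whose order is the restriction of $\leq$. Next, $\A$ restricts to an involution on $D(S)$ (as $\A\A(\alpha)=\D(\alpha)=\alpha$), and the complement laws $\alpha\compo\A(\alpha)=0$ and $\alpha\vee\A(\alpha)=1'$ hold using \eqref{eq:zero} and the definitions. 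The one point that is \emph{not} merely formal is distributivity, and this is exactly where I would invoke $\theta$: under $\theta$ the domain elements are sent bijectively to subidentities of the base, with $\compo,\A,0,1'$ realised as intersection, relative complement, the empty function and the full identity, so $\theta$ identifies $D(S)$ with a field of sets, whence $D(S)$ is a boolean algebra.

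Parts~(2), (3) and~(5) are quick. That $\leq$ is a partial order stable under left and right multiplication is the standard restriction-semigroup fact quoted before the lemma; the remaining monotonicity $\D(s)\leq\D(t)$ follows by applying $\D$ to $s=\D(s)\compo t$ and using \eqref{eq:needed} to get $\D(s)=\D(s)\compo\D(t)$. Part~(3) is a single line: if $s\leq t$ and $\D(s)=\D(t)$ then $s=\D(s)\compo t=\D(t)\compo t=t$ by \eqref{eq:leftid}. Part~(5) then follows from part~(2): since $\alpha\leq\alpha\vee\beta$ in the boolean algebra $D(S)$, right multiplication by $s$ gives $\alpha\compo s\leq(\alpha\vee\beta)\compo s$, and symmetrically for $\beta$.

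Part~(4) is the substantive step, and I expect it to be the main obstacle. The plan is: first establish the auxiliary identity $\D(\rho\compo t)\compo t=\rho\compo t$ for any $\rho\in D(S)$ (from \eqref{eq:Dorder} and \eqref{eq:Dcomm} one gets $\D(\rho\compo t)\compo\rho=\D(\rho\compo t)$, hence $\D(\rho\compo t)\leq\rho$, and then \eqref{eq:leftid} gives the identity); second, apply $\A$ to \eqref{eq:inunion} to obtain $\D(\alpha\compo t)\vee\D(\beta\compo t)=\D((\alpha\vee\beta)\compo t)$. Writing the hypotheses $\alpha\compo t\leq s$ and $\beta\compo t\leq s$ as $\alpha\compo t=\D(\alpha\compo t)\compo s$ and $\beta\compo t=\D(\beta\compo t)\compo s$, the auxiliary identity converts these into $\D(\alpha\compo t)\compo t=\D(\alpha\compo t)\compo s$ and $\D(\beta\compo t)\compo t=\D(\beta\compo t)\compo s$. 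Feeding these into \eqref{eq:realleftunion} (with domain elements $\D(\alpha\compo t)$, $\D(\beta\compo t)$ and arguments $t,s$) yields $\D((\alpha\vee\beta)\compo t)\compo t=\D((\alpha\vee\beta)\compo t)\compo s$, and a final use of the auxiliary identity rewrites the left-hand side as $(\alpha\vee\beta)\compo t$, giving $(\alpha\vee\beta)\compo t=\D((\alpha\vee\beta)\compo t)\compo s$, that is, $(\alpha\vee\beta)\compo t\leq s$. The delicate part is lining up the correct instances of \eqref{eq:inunion} and \eqref{eq:realleftunion}; the representation $\theta$ serves as a reassuring check, since there $\alpha\compo t$ and $\beta\compo t$ are restrictions of the function $s$ whose union is exactly $(\alpha\vee\beta)\compo t$.
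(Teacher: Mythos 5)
Your proof is correct, and for parts (1), (2), (3) and (5) it essentially coincides with the paper's: the paper likewise disposes of part (1) by appeal to Theorem~\ref{thm:Arep}(1) (or direct verification), quotes the standard restriction-semigroup facts for (2)--(3), and derives (5) from (2). Part~(4) is where you genuinely diverge. The paper's argument starts from $s\geq\alpha\compo t$, inserts $\D(t)$ to get $\alpha\compo\D(t)\compo s\geq\alpha\compo t$, applies $\D$ to both sides and invokes the cancellation property of part~(3) to upgrade this to the equality $\alpha\compo\D(t)\compo s=\alpha\compo t$ (similarly for $\beta$), and then feeds these into \eqref{eq:realleftunion} with the domain elements $\alpha,\beta$ themselves and arguments $\D(t)\compo s$ and $t$; it never uses \eqref{eq:inunion}. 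You instead normalise the hypotheses via the auxiliary identity $\rho\compo t=\D(\rho\compo t)\compo t$ (your derivation of it from \eqref{eq:Dorder}, \eqref{eq:Dcomm} and \eqref{eq:leftid} is sound) and apply \eqref{eq:realleftunion} with the domain elements $\D(\alpha\compo t),\D(\beta\compo t)$, which then obliges you to identify $\D(\alpha\compo t)\vee\D(\beta\compo t)$ with $\D((\alpha\vee\beta)\compo t)$ via the $\A$-image of \eqref{eq:inunion}. Both computations check out; yours trades the paper's use of part~(3) for an extra appeal to \eqref{eq:inunion}, and it leaves one step implicit: passing from $\A\bigl(\A(\alpha\compo t)\compo\A(\beta\compo t)\bigr)$ to $\D(\alpha\compo t)\vee\D(\beta\compo t)$ requires $\A(y)=\A(\D(y))$ (equivalently $\A\A\A=\A$), a standard consequence of the modal restriction semigroup axioms — obtainable, e.g., from Theorem~\ref{thm:Arep}(1) — but worth stating explicitly since $\vee$ is only defined on domain elements. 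The paper's route is marginally shorter and stays entirely within \eqref{eq:realleftunion} plus part~(3); yours has the mild advantage that all the domain elements involved are of the form $\D(\cdot\compo t)$, which makes the instantiation of \eqref{eq:realleftunion} mechanical and avoids the antisymmetry-style cancellation step.
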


\begin{proof}
The first part may be proved directly from the antidomain axioms, 
or alternatively it follows from the first part of Theorem~\ref{thm:Arep}.  

Properties (2)--(3) are discussed at the introduction of the natural order 
and are basic properties of restriction semigroups (see \cite{jacsto:01} for example), 
and also follow  from Theorem~\ref{thm:Drep}.

For the fourth part, suppose $s\geq \alpha\compo t$ and $s\geq \beta\compo t$.  
Then $\alpha\compo \D(t)\compo s\geq\alpha\compo t$ and 
if we apply the domain operation to both sides we get 
$\alpha\compo \D(t)\compo \D(s)\geq\alpha\compo \D(t)\geq\alpha\compo \D(t)\compo \D(s)$ 
and the two terms are equal.  
Hence, by the third part, $\alpha\compo \D(t)\compo s=\alpha\compo t$, 
and similarly $\beta\compo \D(t)\compo s=\beta\compo t$.  
By~\eqref{eq:realleftunion}, 
$(\alpha\vee\beta)\compo \D(t)\compo s=(\alpha\vee\beta)\compo t$, 
so $s\geq(\alpha\vee\beta)\compo \D(t)\compo s=(\alpha\vee\beta)\compo t$, as required.

The fifth part follows from part two, since $\alpha, \beta \leq\alpha\vee \beta$. 
\end{proof}

Since $D(S)$ forms a boolean algebra, there are  ultrafilters in $D(S)$.
We will denote these by capital Greek letters.  
Also note that the natural order~$\leq$, when restricted to elements in $D(S)$, 
agrees with the boolean ordering: $\alpha\leq \beta$ if and only if $\alpha\compo \beta=\alpha$  
(this follows from Theorem~\ref{thm:Drep} for example).
For a subset $X\subseteq S$ of elements, we define
\begin{equation*}
\upset X:=\{s\in S\mid (\exists x\in X) x\leq s\},
\end{equation*}
the \emph{upset} of $X$ in $S$.  
At times we will consider the restriction to domain elements of the upset of $X$, 
namely $\upset X\cap D(S)$.  An upset $X=\upset X$ is \emph{down directed} 
if, for every $x,y\in X$, there exists $z\in X$ with $z\leq x$ and $z\leq y$.
In the following, we extend the operations on $S$ pointwise to subsets of $S$, 
and treat elements of $S$ as singletons when convenient.  
So for example, for any $s\in S$ and $X\subseteq S$, we let 
$s\compo X:=\{s\compo x\mid x\in X\}$, 
while $\D(X)\compo X=\{\D(x)\compo y\mid x, y\in X\}$.

\begin{definition}
Let $\Delta$ be an ultrafilter of $D(S)$ and $s\in S$.  
If $0\notin \Delta\compo s$ then we say that the upset 
$\upset(\Delta\compo s)$ of $S$ is an \emph{ultrasubset} $S$.  
\end{definition}

Note that the upset (in $S$) of any ultrafilter $\Delta$ of $D(S)$ 
is an ultrasubset, because $\upset(\Delta\compo 1')=\upset\Delta$.  

\begin{lem}\label{lem:allultra}
\begin{enumerate}
\item[(i)] 
If $\Delta$ is an ultrafilter of $D(S)$ and $a\in S$ has $0\notin \Delta\compo a$
then $\Delta=\upset\D(\Delta\compo a)\cap\D(S)$, that is, $\Delta$ is the upset of 
$\D(\Delta\compo a)$ in $D(S)$ and so it is the unique ultrafilter of $D(S)$ 
extending $\D(\Delta\compo a)$.
\item[(ii)]
The following are equivalent for a subset $U\subseteq S$:
\begin{itemize}
\item 
$U$ is an ultrasubset of $S$,
\item 
$U$ is a down-directed filter of $S$ with respect to $\leq$
and $\upset\D(U)\cap D(S)$ is an ultrafilter of $D(S)$,  
\item 
$U$ is a maximal proper down-directed filter of $S$ with respect to $\leq$.
\end{itemize}
Moreover, for any ultrasubset $U$ and $a\in U$, 
we have $U=\upset(\D(U)\compo a)$.
\item[(iii)] 
If $\upset(\Delta_1\compo s_1)$ and $\upset(\Delta_2\compo s_2)$ are ultrasubsets of $S$ with 
$\upset(\Delta_1\compo s_1)\subseteq \upset(\Delta_2\compo s_2)$
then $\Delta_1=\Delta_2$ and 
$\upset(\Delta_1\compo s_1)=\upset(\Delta_2\compo s_2)$.
\item[(iv)] 
If $\upset(\Delta\compo s)\cap \upset(\Delta\compo t)\neq \varnothing$ 
then $\upset(\Delta\compo s)= \upset(\Delta\compo t)$.
\item[(v)] 
If $0\notin\Delta\compo s$ then the upset of $\R(\Delta\compo s)$ in $D(S)$ 
is an ultrafilter of $D(S)$.
\end{enumerate}
\end{lem}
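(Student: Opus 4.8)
The engine behind all five parts is a single ``projection'' observation, which I would establish first: if $0\notin\Delta\compo s$ and $\delta\in\Delta$, then $\delta\compo s=\D(\delta\compo s)\compo s$, the domain element $\D(\delta\compo s)$ lies in $\Delta$, and hence $\D(s)\in\Delta$. The first equality is the definition of $\leq$ applied to $\D(\delta\compo s)\compo s\leq\delta\compo s$ together with Lemma~\ref{lem:BA}(3); membership of $\D(\delta\compo s)$ in $\Delta$ holds because otherwise its complement $\delta\compo\A(\delta\compo s)$ would lie in $\Delta$ yet satisfy $\delta\compo\A(\delta\compo s)\compo s=\A(\delta\compo s)\compo\delta\compo s=0$, forcing $0\in\Delta\compo s$; and $\D(s)\in\Delta$ then follows by upward closure since $\D(\delta\compo s)=\delta\compo\D(s)\leq\D(s)$ via \eqref{eq:needed}. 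Part~(i) is this statement read off directly: $\D(\Delta\compo a)\subseteq\Delta$ gives $\upset\D(\Delta\compo a)\cap D(S)\subseteq\Delta$, while $\gamma\geq\D(\gamma\compo a)$ for $\gamma\in\Delta$ gives the reverse inclusion. Uniqueness is then automatic, since the displayed set already equals the ultrafilter $\Delta$, so any ultrafilter extending $\D(\Delta\compo a)$ must contain it and hence coincide with it.

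For part~(ii) the key lemma I would isolate is: for \emph{any} nonempty proper down-directed filter $U$ with $\Delta:=\upset\D(U)\cap D(S)$ and any $a\in U$, one has $U=\upset(\Delta\compo a)$ and $0\notin\Delta\compo a$. Both inclusions use down-directedness to pass to a common lower bound $w\leq u,a$ of a given $u\in U$ and $a$, rewrite $w=\D(w)\compo a$ with $\D(w)\in\D(U)\subseteq\Delta$, and invoke upward closure; the failure of $0\in\Delta\compo a$ uses $\D(\delta\compo a)=\delta\compo\D(a)\in\Delta$. Granting this, (a)$\Rightarrow$(b) identifies $\upset\D(U)\cap D(S)$ with $\Delta$, (b)$\Rightarrow$(a) is exactly the isolated lemma (take $s:=a$), and the ``moreover'' formula $U=\upset(\D(U)\compo a)$ falls out after noting $\upset(\D(U)\compo a)=\upset(\Delta\compo a)$.

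For the maximality equivalence I would prove (a)$\Rightarrow$(c) and (c)$\Rightarrow$(a). An ultrasubset is proper (else $0\in\D(U)\subseteq\Delta$) and, if dominated by a proper down-directed filter $V$, forces $\upset\D(V)\cap D(S)=\Delta$ by ultrafilter maximality, whence $V$ is itself an ultrasubset with the same $\Delta$ and base point, so $V=U$. Conversely, if $U$ is maximal, the isolated lemma already gives $U=\upset(\Delta\compo a)$ with $\Delta$ a proper filter; were $\Delta$ not an ultrafilter I would extend it to an ultrafilter $\Delta^{*}$, check $0\notin\Delta^{*}\compo a$ (again via $\delta^{*}\compo\D(a)\in\Delta^{*}$), obtaining an ultrasubset $\upset(\Delta^{*}\compo a)\supseteq U$ that maximality forces to equal $U$, so $\Delta^{*}=\Delta$ after all. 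Parts~(iii) and~(iv) are then short: in (iii), $U_1\subseteq U_2$ forces $\Delta_1\subseteq\Delta_2$ hence $\Delta_1=\Delta_2$ by ultrafilter maximality, and the ``moreover'' formula at a common point gives equality of the upsets; (iv) is immediate, since a common element $b$ yields $U_i=\upset(\Delta\compo b)$ for both.

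The main obstacle is part~(v), the only place range genuinely enters. Properness and the filter property are easy ($\R$ is $\leq$-monotone by \eqref{eq:Rorder}, $\delta_1\compo\delta_2$ witnesses meet-closure, and $\R(x)=0$ forces $x=x\compo\R(x)=0$), so the real content is the prime dichotomy: for each $\gamma\in D(S)$ I must place $\gamma$ or $\A(\gamma)$ in $\upset\R(\Delta\compo s)\cap D(S)$. Setting $\mu:=\D(s\compo\gamma)$ and $\nu:=\D(s\compo\A(\gamma))$, the crux is to prove $\mu\vee\nu=\D(s)$. One inclusion is clear from $\mu,\nu\leq\D(s)$; for the other I would run the twisted law \eqref{eq:Atwisted} to get $\A(\mu)\compo s=\A(s\compo\gamma)\compo s=s\compo\A(\gamma)=\nu\compo s$, whence $\A(\mu)\compo\A(\nu)\compo s=\A(\nu)\compo\nu\compo s=0$ and therefore $\A(\mu)\compo\A(\nu)\compo\D(s)=0$ after applying $\D$ and \eqref{eq:needed}, i.e. $\D(s)\leq\mu\vee\nu$. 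Since $\D(s)\in\Delta$ and $\Delta$ is prime, $\mu\in\Delta$ or $\nu\in\Delta$; as $\R(s\compo\gamma)=\R(\mu\compo s)\leq\R(\gamma)=\gamma$ (using \eqref{eq:Rorder} and \eqref{eq:DR}), the first case puts $\gamma$ in the range upset and the second, symmetrically, puts $\A(\gamma)$ there. The delicate bookkeeping is precisely this identity $\mu\vee\nu=\D(s)$ and the repeated passage between ``annihilates $s$'' and ``is $0$ below $\D(s)$''.
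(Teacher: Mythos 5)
Your proposal is correct, and on parts (i), (iii) and (iv) it matches the paper's proof (the paper is terser, deducing (iii) and (iv) from part (i) together with the ``moreover'' clause of (ii), exactly as you do). The genuine differences are in (ii) and (v). Writing (a), (b), (c) for the three bullets of (ii): the paper proves the chain (a)$\Rightarrow$(b)$\Rightarrow$(c), with the maximality step (b)$\Rightarrow$(c) done by a direct computation --- given a proper down-directed filter $V\supseteq U$ and $a\in V$, it takes a lower bound $c$ of $a$ and some $b\in U$ and runs the ultrafilter dichotomy on $\D(c)$ versus $\A(c)$ to force $a\in U$ --- and its final paragraph establishes only the identity $U=\upset(\D(U)\compo a)$, leaving the closing step (c)$\Rightarrow$(a) (that the trace $\upset\D(U)\cap D(S)$ is actually an ultrafilter) tacit. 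You instead factor everything through a single lemma ($U=\upset(\Delta\compo a)$ with $0\notin\Delta\compo a$, for $\Delta:=\upset\D(U)\cap D(S)$ and any $a\in U$), obtain (a)$\Rightarrow$(c) by transporting the maximality of the ultrafilter $\Delta$ to $U$, and make (c)$\Rightarrow$(a) explicit by extending $\Delta$ to an ultrafilter $\Delta^{*}$, checking $0\notin\Delta^{*}\compo a$, and appealing to maximality of $U$; this replaces the paper's most technical computation by a shorter transfer argument and fills in the step the paper glosses over (your only omitted triviality is that an ultrasubset is down-directed, witnessed by $(\gamma\compo\delta)\compo a$, which (a)$\Rightarrow$(b) needs). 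In (v), both proofs pivot on \eqref{eq:Atwisted} and the bound $\R(s\compo\gamma)\leq\gamma$, and neither needs \eqref{eq:antirange}: the paper argues contrapositively ($\alpha\notin\upset\R(\Delta\compo s)$ gives $s\compo\alpha\notin\Delta\compo s$, hence $\D(s\compo\alpha)\notin\Delta$, hence $\A(s\compo\alpha)\in\Delta$ and then $\A(\alpha)$ lands in the upset via $s\compo\A(\alpha)=\A(s\compo\alpha)\compo s$), while you prove the boolean identity $\D(s\compo\gamma)\vee\D(s\compo\A(\gamma))=\D(s)$ and apply primeness of $\Delta$ at $\D(s)\in\Delta$ (which your initial projection observation supplies). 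The two arguments for (v) carry the same mathematical content; yours isolates a reusable identity and makes visible exactly where $\D(s)\in\Delta$ enters, while the paper's contrapositive is marginally more economical.
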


\begin{proof}
(i)  
This follows because for every $\delta\in \Delta$, 
the element $\D(\delta\compo a)$ is in $\D(\upset(\Delta\compo a))$ 
and $\D(\delta\compo a)\leq \delta$. 

(ii)
First assume that $U=\upset(\Delta\compo a)$ is an ultrasubset of $D(S)$.  
As $\Delta$ (an ultrafilter of $D(S)$) is down directed, 
it follows that so is $U$, because $(\gamma\compo\delta)\compo a$ is a lower bound of both
$\gamma\compo a$ and $\delta\compo a$.  
Also,  $\upset\D(U)\cap D(S)=\Delta$ from part (i). 

Next, assume that $U$ is a down-directed filter with $\D(U)$ 
generating an ultrafilter of $D(S)$.  
We show that $U$ is a maximal down-directed filter.  
Assume $V$ is a down-directed filter with $U\subseteq V$ but $0\notin V$, we show that $U=V$.
Consider any $a\in V$. 
Then for any $b\in U$, as $U\subseteq V$, 
we have a lower bound $c\in V$ for $\{a,b\}$.  
Now $\D(c)\in \upset\D(U)$, because otherwise $\A(c)\in \upset\D(U)$ 
(as $\upset\D(U)\cap D(S)$ is an ultrafilter of $D(S)$), 
which would give an element $d\in U$ with $\D(d)\leq \A(c)$.  
But then $\D(d)\compo \D(c)=0$, so there could be no lower bound for $d$ and $c$ in $V$.  
So $\D(c)\in \upset\D(U)$.
Thus there is $u\in U$ with $\D(u)\leq\D(c)$.
Let $u'\in U$ be a lower bound of $u$ and $b$.
Then $\D(u')\compo b=u'\in U$, whence
$\D(u')\compo b\leq\D(u)\compo b\leq \D(c)\compo b\in U$.
But $c=\D(c)\compo b$ and $U$ is upward closed, showing that $a\in U$.  Thus $U=V$.

Now assume that $U$ is a maximal down-directed filter with respect to $\leq$.
Consider any $a\in U$. We claim that $\upset(\D(U)\compo a)=U$.  
For the inclusion $\upset(\D(U)\compo a)\subseteq U$, 
consider $\D(x)\compo a$ for some $x\in U$.  
Let $z\in U$ have $z\leq x$ and $z\leq a$, and note that $\D(x)\compo z=z$.  
Then $\D(x)\compo a\geq \D(x)\compo z=z$, so $\D(x)\compo a\in U$ as claimed.
For the reverse inclusion, consider $x\in U$.  
Then there is $z\in U$ with $z\leq x$ and $z\leq a$.  
So $\D(z)\in \D(U)$ and $\D(z)\compo a=z=\D(z)\compo x$.  
Then $x\geq \D(z)\compo x=\D(z)\compo a$, an element of $\D(U)\compo a$.  
So $x\in \upset(\D(U)\compo a)$ giving $\upset(\D(U)\compo a)= U$.  
Note that this also shows that $U$ can be written as 
$\upset(\D(U)\compo a)$ for any element $a\in U$, the final claim of part~(ii).

(iii) 
Part (i) gives $\Delta_1=\Delta_2$ and the final statement of part (ii) gives
$\upset(\Delta_1\compo s_1)=\upset(\Delta_2\compo s_2)$. 

(iv) 
This also follows from the final statement of (ii).
Once the two ultrasubsets have a common element $c$ 
then both can be written as $\upset(\Delta\compo c)$.

(v) 
To show that $\upset\R(\Delta\compo s)$ is a filter, 
it suffices to show that if $\delta_1,\delta_2\in \Delta$ 
then $\R(\delta_1\compo s)\compo\R(\delta_2\compo s)$ is in $\upset\R(\Delta\compo s)$.  
This follows because $\delta:=\delta_1\compo\delta_2$ is in $\Delta$ and 
$\R(\delta\compo s)$ is a lower bound of both $\R(\delta_1\compo s)$ and 
$\R(\delta_2\compo s)$ in $\R(\Delta\compo s)$.  

Now we 
show that it is an ultrafilter.
Assume that $\alpha\notin \upset\R(\Delta\compo s)$,
so $s\compo \alpha\notin \Delta\compo s$.  
Hence $\D(s\compo \alpha)\notin \upset \D(\Delta\compo s)=\upset \Delta$.  
So $\A(s\compo \alpha)\in\Delta$ showing that 
$s\compo \A(\alpha)\refe{eq:Atwisted}\A(s\compo \alpha)\compo s\in \Delta\compo s$.  
Then $\A(\alpha)\geq \R(\A(s\compo \alpha)\compo s)\in \R(\Delta\compo s)$, 
giving $\A(\alpha)\in \upset\R(\Delta\compo s)$.  
So the upset of $\R(\Delta\compo s)$ in $D(S)$ is an ultrafilter.
\end{proof}

\begin{lem}\label{lem:ultrasubsetfacts}
\begin{enumerate}
\item[(i)] 
$\upset\left(\upset(\Delta_1\compo s_1)\compo\upset(\Delta_2\compo s_2)\right)
=\upset(\Delta_1\compo s_1\compo \Delta_2\compo s_2)$.
\item[(ii)] 
If $0\notin \upset(\Delta_1\compo s_1\compo \Delta_2\compo s_2)$ then 
$\upset(\Delta_1\compo s_1\compo \Delta_2\compo s_2)=\upset(\Delta_1\compo (s_1\compo s_2))$.
\item[(iii)] 
$\upset\left(\upset(\Delta_1\compo s_1)\meet\upset(\Delta_2\compo s_2)\right)
=\upset\left(\left(\Delta_1\compo s_1\right)\meet 
\left(\Delta_2\compo s_2\right)\right)$.  
\item[(iv)]
If $0\notin\left(\Delta_1\compo s_1\right)\meet \left(\Delta_2\compo s_2\right)$ 
then $\Delta_1=\Delta_2$ and $\upset(\Delta_1\compo s_1)=\upset(\Delta_2\compo s_2)$ 
and 
$\upset\left( (\Delta_1\compo s_1\right)\meet \left(\Delta_2\compo s_2)\right)
=\upset \left(\Delta_1\compo (s_1\meet  s_2)\right)$.
\end{enumerate}
\end{lem}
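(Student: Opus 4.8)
The plan is to split the four parts into two groups. Parts~(i) and~(iii) are ``soft'' identities that hold without any nonvanishing hypothesis: they say that applying $\upset$ to the pointwise product (resp.\ meet) of two upsets collapses back to the upset of the product (resp.\ meet) of the generators, and this is purely a consequence of monotonicity of $\compo$ and $\meet$. Parts~(ii) and~(iv) are where the hypothesis $0\notin(\cdots)$ does the real work, and part~(ii) is where I expect the main obstacle.

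For (i) and (iii) I would first record monotonicity. For $\compo$ it is Lemma~\ref{lem:BA}(\ref{part:mon}); for $\meet$ I would first check that $x\meet y$ is the \emph{greatest} lower bound of $x,y$ in $(S,\leq)$. That it is a lower bound is immediate from \eqref{eq:meetorder}; and if $z\leq x$ and $z\leq y$, so that $z=\D(z)\compo x=\D(z)\compo y$, then \eqref{eq:meettwisted} gives $\D(z)\compo(x\meet y)=(\D(z)\compo x)\meet(\D(z)\compo y)=z$, whence $z\leq x\meet y$. Greatest-lower-bound forces $\meet$ to be monotone. Granting this, each of (i) and (iii) is a pair of inclusions: the inclusion $\supseteq$ is trivial since $\Delta_i\compo s_i\subseteq\upset(\Delta_i\compo s_i)$, while for $\subseteq$ a typical element of the pointwise product (resp.\ meet) of the two upsets is $a\compo b$ (resp.\ $a\meet b$) with $a\geq\delta_1\compo s_1$ and $b\geq\delta_2\compo s_2$, and monotonicity gives $a\compo b\geq(\delta_1\compo s_1)\compo(\delta_2\compo s_2)$ (resp.\ $a\meet b\geq(\delta_1\compo s_1)\meet(\delta_2\compo s_2)$), whose right-hand side already lies in the generating set.

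Part (ii) is the crux. The inclusion $\upset(\Delta_1\compo(s_1\compo s_2))\subseteq\upset(\Delta_1\compo s_1\compo\Delta_2\compo s_2)$ is free, using $1'\in\Delta_2$ to write $\delta_1\compo(s_1\compo s_2)=\delta_1\compo s_1\compo 1'\compo s_2$. For the reverse inclusion I would take a generator $\delta_1\compo s_1\compo\delta_2\compo s_2$ and, since $\delta_2\in D(S)$, apply \eqref{eq:Dtwisted} to rewrite $s_1\compo\delta_2=\D(s_1\compo\delta_2)\compo s_1$, so the generator becomes $\bigl(\delta_1\compo\D(s_1\compo\delta_2)\bigr)\compo(s_1\compo s_2)$. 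It then suffices to prove $\D(s_1\compo\delta_2)\in\Delta_1$, for then $\delta_1\compo\D(s_1\compo\delta_2)\in\Delta_1$ (ultrafilters of $D(S)$ are closed under the boolean meet $\compo$) and the generator lies in $\Delta_1\compo(s_1\compo s_2)$. This is exactly where the hypothesis enters: if $\D(s_1\compo\delta_2)\notin\Delta_1$, then its boolean complement, which is $\A(s_1\compo\delta_2)$ since $\A\D=\A$, lies in $\Delta_1$; but then $\A(s_1\compo\delta_2)\compo s_1\compo\delta_2\compo s_2=0\compo s_2=0$ by \eqref{eq:zero} and \eqref{eq:zero'}, exhibiting $0\in\Delta_1\compo s_1\compo\Delta_2\compo s_2$, contrary to hypothesis. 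The only delicate point is this identification of the complement; everything else is routine rewriting.

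Finally part (iv). I would first prove $\Delta_1=\Delta_2$ by complementation: if some $\alpha$ lies in $\Delta_1\setminus\Delta_2$, then $\A(\alpha)\in\Delta_2$, and for $m:=(\alpha\compo s_1)\meet(\A(\alpha)\compo s_2)$ monotonicity of $\D$ together with $\D(\gamma\compo s)\leq\gamma$ for $\gamma\in D(S)$ (from \eqref{eq:Dorder}) gives $\D(m)\leq\alpha$ and $\D(m)\leq\A(\alpha)$, hence $\D(m)\leq\alpha\compo\A(\alpha)=0$ in the boolean algebra $D(S)$, so $m=\D(m)\compo(\alpha\compo s_1)=0$ by \eqref{eq:meetorder}; this contradicts the hypothesis, and a symmetric argument yields $\Delta_1=\Delta_2=:\Delta$. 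Evaluating the hypothesis on the diagonal and using \eqref{eq:meettwisted}, $\delta\compo(s_1\meet s_2)=(\delta\compo s_1)\meet(\delta\compo s_2)\neq 0$ for every $\delta\in\Delta$, so $\upset(\Delta\compo(s_1\meet s_2))$ is an ultrasubset; since $\delta\compo(s_1\meet s_2)\leq\delta\compo s_1$ we get $\Delta\compo s_1\subseteq\upset(\Delta\compo(s_1\meet s_2))$ and hence $\upset(\Delta\compo s_1)\subseteq\upset(\Delta\compo(s_1\meet s_2))$, an inclusion of ultrasubsets, forced to equality by Lemma~\ref{lem:allultra}(iii). The same computation with $s_2$ gives $\upset(\Delta_1\compo s_1)=\upset(\Delta\compo(s_1\meet s_2))=\upset(\Delta_2\compo s_2)$, the second asserted identity. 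The remaining identity $\upset\bigl((\Delta\compo s_1)\meet(\Delta\compo s_2)\bigr)=\upset(\Delta\compo(s_1\meet s_2))$ then follows directly, the inclusion $\supseteq$ being trivial and $\subseteq$ obtained by replacing a pair $\delta_1,\delta_2$ with $\delta_1\compo\delta_2\in\Delta$ and invoking monotonicity of $\meet$, so that the diagonal $\{\delta\compo(s_1\meet s_2)\}$ is cofinal from below.
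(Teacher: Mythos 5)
Your proof is correct, but it takes a genuinely more computational route than the paper, whose entire proof is four lines resting on the maximality machinery of Lemma~\ref{lem:allultra}. For parts~(i) and~(iii) the paper simply says ``trivial''; your verification that $x\meet y$ is the greatest lower bound in $(S,\leq)$ (lower bound from \eqref{eq:meetorder}, greatest via \eqref{eq:meettwisted}) makes explicit the monotonicity the paper leaves unstated. The real divergence is in part~(ii): the paper observes only that $\upset(\Delta_1\compo s_1\compo \Delta_2\compo s_2)\supseteq \upset(\Delta_1\compo s_1\compo s_2)$ and that the smaller set is a \emph{maximal} proper down-directed filter by Lemma~\ref{lem:allultra}(ii), forcing equality; you instead rewrite each generator through \eqref{eq:Dtwisted} as $\bigl(\delta_1\compo\D(s_1\compo\delta_2)\bigr)\compo(s_1\compo s_2)$ and establish $\D(s_1\compo\delta_2)\in\Delta_1$ by a complement-and-zero contradiction. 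Similarly, in part~(iv) the paper extracts $\Delta_1=\Delta_2$ for free from Lemma~\ref{lem:allultra}(ii)--(iii) after noting that $\upset\bigl((\Delta_1\compo s_1)\meet(\Delta_2\compo s_2)\bigr)=\upset\bigl((\Delta_1\meet\Delta_2)\compo(s_1\meet s_2)\bigr)$ is a down-directed filter containing both $\Delta_1\compo s_1$ and $\Delta_2\compo s_2$, whereas you prove it head-on with the witness $m=(\alpha\compo s_1)\meet(\A(\alpha)\compo s_2)$. Your approach buys elementary, axiom-level transparency (each step names the law used); the paper's buys brevity by reusing the ultrasubset characterisation once and for all. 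Two small points you should patch, neither fatal: (a) in (ii) you invoke $\A\D=\A$ (equivalently, that $\A(s_1\compo\delta_2)$ is the boolean complement of $\D(s_1\compo\delta_2)$ in $D(S)$); this is valid in algebras of functions and hence holds here via Theorem~\ref{thm:Arep}(1) applied to the $\{\compo,\A\}$-reduct, but it is not on the paper's axiom list, so it deserves that citation or a short derivation; (b) in (iv), before applying Lemma~\ref{lem:allultra}(iii) to the inclusion $\upset(\Delta\compo s_1)\subseteq\upset(\Delta\compo(s_1\meet s_2))$ you need $\upset(\Delta\compo s_1)$ itself to be an ultrasubset, i.e.\ $0\notin\Delta\compo s_1$; this is immediate, since $\delta\compo s_1=0$ would force $(\delta\compo s_1)\meet(\delta\compo s_2)=0$ against the hypothesis, but it should be said.
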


\begin{proof}
Part~(i) is trivial, as is part~(iii).
For part~(ii) note that 
$\upset(\Delta_1\compo s_1\compo \Delta_2\compo s_2)\supseteq 
\upset(\Delta_1\compo s_1\compo s_2)$ 
and that $\upset(\Delta_1\compo s_1\compo s_2)$ is maximal by Lemma~\ref{lem:allultra} part~(ii). 
For part~(iv), observe that 
$\upset\left(\left(\Delta_1\compo s_1\right)\meet\left(\Delta_2\compo s_2\right)\right)
=\upset\left(\left(\Delta_1\meet\Delta_2\right)\compo \left(s_1\meet s_2\right)\right)$ 
is certainly a down-directed filter, moreover, one that contains both 
$\Delta_1\compo s_1$ and $\Delta_2\compo s_2$.  
The statement now follows from Lemma~\ref{lem:allultra} parts~(ii) and~(iii).
\end{proof}

\begin{definition}\label{defn:Sflat}
Let $S^\flat$ consist of $S$ along with the set of ultrasubsets of $S$.  
We define operations $\Dd$, $\Rr$, $\mathop{*}$ and $\wedge$ on $S^\flat$ 
corresponding to $\D$, $\R$, $\compo$ and $\meet$ \up(if present\up).
\begin{enumerate}
\item 
$\Dd(\upset(\Delta\compo s)):=\upset\D(\Delta\compo s)=
\upset(\Delta\compo \D(s))$ \up($=\upset\Delta$ when $\D(s)\in\Delta$, 
by Lemma~\ref{lem:allultra}(i)\up),
\item 
$\Rr(\upset(\Delta\compo s)):=\upset\R(\Delta\compo s)$,
\item 
$\upset(\Delta_1\compo s_1)\mathop{*}\upset(\Delta_2\compo s_2)
:=\upset\left((\Delta_1\compo s_1)\compo(\Delta_2\compo s_2)\right)$,
\item 
\up(if $\cdot$ is present\up) 
$\upset(\Delta_1\compo s_1)\wedge\upset(\Delta_2\compo s_2)
:=\upset\left((\Delta_1\compo s_1)\cdot(\Delta_2\compo s_2)\right)$,
\end{enumerate}
where $\Delta, \Delta_1, \Delta_2$ are ultrafilters of $D(S)$ and $s, s_1, s_2\in S$.
\end{definition}

Note that, while $S$ is not an ultrasubset by definition (as $0\in S$), 
it is covered by Definition~\ref{defn:Sflat} because $S=\upset(\Delta\compo 0)$.  
It is easy to see from Definition~\ref{defn:Sflat} that 
$S$ acts as an absorbing zero element with respect to both $*$ and $\wedge$, 
and that it is fixed by both $\Rr$ and $\Dd$ 
(simply because $0$ is fixed by $\D$ and $\R$ and is contained in $S$).  
Observe that Lemma~\ref{lem:ultrasubsetfacts} shows that in parts~(3) and~(4) 
of this definition either the right hand side  will  be $S$ 
or it can be written in the form $\upset(\Gamma\compo t)$.
Hence $S^\flat$ is indeed closed under the above operations.

\begin{lem}\label{lem:Sflat}
If $\mathscr{S}=(S,\compo,\A,\R)$ 
\up(or $\mathscr{S}=(S,\compo,\meet,\A,\R)$\up)
satisfies the axioms of Theorem~\ref{thm:main} 
then $\mathscr{S}^\flat=( S^\flat,\mathop{*},\Dd,\Rr)$ 
\up(or $\mathscr{S}^\flat=( S^\flat,\mathop{*},\wedge,\Dd,\Rr)$,
respectively\up) is representable. 
Furthermore, we can assume that the constant element $S\in S^\flat$ is represented as 
the empty function.
\end{lem}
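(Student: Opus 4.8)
The goal is to show that $\mathscr S^\flat=(S^\flat,\mathop{*},\Dd,\Rr)$ (or with $\wedge$ added) is representable as an algebra of functions with composition, domain and range (and intersection, if present), and moreover that the element $S\in S^\flat$ can be represented as the empty function. The strategy is to verify that $\mathscr S^\flat$ satisfies the axioms of Theorem~\ref{thm:DRrep} (or Theorem~\ref{thm:DRrep}(2) when $\wedge$ is present) and then invoke that theorem to obtain a representation; the clause about $S$ will follow from the remark, recorded just after Definition~\ref{defn:Sflat}, that $S$ is an absorbing zero fixed by $\Dd$ and $\Rr$, together with the final sentence of Theorem~\ref{thm:DRrep} on representing $0$.

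First I would observe that the operations $\mathop{*}$, $\Dd$, $\Rr$ and $\wedge$ are genuinely well-defined total operations on $S^\flat$: this is exactly the content of Lemma~\ref{lem:ultrasubsetfacts}, which guarantees that each output is either $S$ itself or can be written in the canonical form $\upset(\Gamma\compo t)$, so closure holds and the definitions do not depend on the chosen representatives $(\Delta_i,s_i)$. I would lean on Lemma~\ref{lem:allultra}(iii)--(iv) for the uniqueness of the representing pair up to the equivalence $\upset(\Delta\compo s)=\upset(\Delta\compo t)$, so that $\Dd$ and $\Rr$ are well-defined on ultrasubsets. With well-definedness in hand, the bulk of the work is to check, one at a time, the associativity and the domain/range laws \eqref{eq:leftid}--\eqref{eq:Dtwisted}, \eqref{eq:rightid}--\eqref{eq:rightcong} and \eqref{eq:DR} at the level of ultrasubsets, and (in the $\wedge$ case) the semilattice axioms together with \eqref{eq:meettwisted} and \eqref{eq:meetorder}. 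Each of these should reduce, via Lemma~\ref{lem:ultrasubsetfacts}, to the corresponding law already holding in $\mathscr S$ applied to representatives; for instance $\Dd$ fixing range elements and $\Rr$ fixing domain elements should fall out of \eqref{eq:DR} in $S$ once one writes $\Dd(\upset(\Delta\compo s))=\upset(\Delta\compo\D(s))$ and computes $\Rr$ of it using Lemma~\ref{lem:allultra}(v). I would handle the cases where a product or meet collapses to $S$ separately but quickly, using that $S$ is an absorbing zero fixed by both unary operations.

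The step I expect to be the main obstacle is verifying Schein's domain--range interaction and the twisted laws \eqref{eq:Dtwisted}, \eqref{eq:Rorder}, \eqref{eq:rightcong} at the ultrasubset level, because these mix $*$ with $\Dd$/$\Rr$ and therefore require controlling how the ultrafilter component transforms under composition. Concretely, the delicate point is the interaction captured by Lemma~\ref{lem:ultrasubsetfacts}(ii): when $0\notin\upset(\Delta_1\compo s_1\compo\Delta_2\compo s_2)$ the product is $\upset(\Delta_1\compo(s_1\compo s_2))$, so the \emph{left} ultrafilter $\Delta_1$ is inherited by the product, which is what makes the domain laws transfer; but for the range laws one must instead track how $\R$ pushes the filter forward, and here I expect to need Lemma~\ref{lem:allultra}(v) (that $\upset\R(\Delta\compo s)$ is an ultrafilter) essentially to show that $\Rr$ lands in a legitimate ultrasubset and that law \eqref{eq:rightcong}, $\Rr(\Rr(x)*y)=\Rr(x*y)$, holds. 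Once these range/domain transfers are established, the remaining laws are routine, and an appeal to Theorem~\ref{thm:DRrep} completes the representability; the statement about $S$ being the empty function then follows because $S$ behaves as the constant $0$ and Theorem~\ref{thm:DRrep} guarantees $0$ is correctly represented as the empty function.
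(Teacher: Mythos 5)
Your plan coincides with the paper's own proof: the paper likewise observes that the operations on $S^\flat$ are determined elementwise, verifies the axioms of Theorem~\ref{thm:DRrep} on ultrasubsets (it writes out only \eqref{eq:Dtwisted} as a sample, exactly the kind of ``twisted'' law you single out as the main obstacle), and then invokes the representation theorems of the preceding section; the clause about $S$ being the empty function is handled just as you propose, via $S$ being the absorbing zero of $S^\flat$ and the remark at the end of Theorem~\ref{thm:DRrep} that a zero satisfying $0\compo x=x\compo 0=\D(0)=0$ is represented as $\varnothing$. Your use of Lemma~\ref{lem:ultrasubsetfacts} for closure/well-definedness and of Lemma~\ref{lem:allultra}(v) to see that $\Rr$ lands on genuine ultrasubsets is also exactly what the paper's construction relies on.

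One concrete omission: in the meet-free case your checklist is \eqref{eq:leftid}--\eqref{eq:Dtwisted}, \eqref{eq:rightid}--\eqref{eq:rightcong} and \eqref{eq:DR}, but Theorem~\ref{thm:DRrep}(1) (Schein's axiomatisation) requires the quasiequation \eqref{eq:schein} as well --- it is redundant only in the presence of meet, which is precisely the case where you may drop it, not the case where you must keep it. So as stated your appeal to Theorem~\ref{thm:DRrep}(1) is incomplete; the law does hold in $\mathscr{S}^\flat$ and is checked by the same elementwise method (apply \eqref{eq:schein} in $S$ to representatives, treating the collapse-to-$S$ case via $u\compo v=0\Rightarrow \R(u)\compo v=0$), but it needs to be on the list. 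A second, smaller caution: ``reduces to the law applied to representatives'' slightly undersells the twisted laws --- in the reverse inclusion for \eqref{eq:Dtwisted} one gets an element of the form $\D(a\compo b)\compo c$ with $a,c\in U$ possibly distinct, and one must pass to a common lower bound $e\in U$ using down-directedness of ultrasubsets before the identity in $S$ applies; this is the one genuinely non-formal step in the paper's verification.
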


\begin{proof}
It suffices to show that $\mathscr{S}^\flat$ satisfies 
the axioms for representability so that we can apply the
corresponding representation theorems from the previous section.
Checking the axioms is almost immediate because the operations are 
determined elementwise.  
For example, we verify \eqref{eq:Dtwisted}.  Consider ultrasubsets $U,V$.  
Now $x\in U\mathop{*}\Dd(V)$ if and only if $x\geq a\compo \D(b)$ 
for some $a\in U$ and $b\in V$.  
But $a\compo \D(b)=\D(a\compo b)\compo a$ which is an element of 
$\D(U\compo V)\compo U$, a subset of $\Dd(U*V)*U$.  
For the reverse inclusion, let $x\in\Dd(U*V)*U$.  
There are $a, c\in U$ and $b\in V$ such that $x\geq \D(a\compo b)\compo c$.  
Let $e\in U$ be a lower bound of $a, c$. 
Then $x\geq\D(e\compo b)\compo e=e\compo \D(b)\in U*\Dd(V)$, as required.
\end{proof}

\begin{proof}[Proof of Theorem \ref{thm:main}.]  
Let $\phi$ be a faithful representation of $\mathscr{S}^\flat$ 
as functions on some set $X$, in which the zero element $S$ is represented 
as the empty function.
Say $\mathscr{S}^\flat$ is isomorphic to $\mathscr{T}$ with universe $T$ and operations
$\compo$, $\D$, $\R$ and $\meet$ (when meet is in the signature)
interpreted as composition, domain, range and intersection of functions.

We may further assume that $X$ is the union of the domains of the functions in $\phi(S^\flat)$.
Define $\phi^\sharp$ on $S$ by 
\begin{equation}\label{eq:phisharp}
\phi^\sharp(s):=\bigcup\{\phi(\upset(\Delta\compo s))\mid 0\notin\Delta\compo s\}.
\end{equation} 
First we show that $\phi^\sharp(s)$ is in fact a function on $X$ for every $s\in S$.  
For this it suffices to show that if $\Delta_1\neq \Delta_2$ and 
$0\not\in \Delta_1\compo s, \;\Delta_2\compo s$ then 
$\phi(\upset(\Delta_1\compo s))$ has disjoint domain from 
$\phi(\upset(\Delta_2\compo s))$.  
Observe that $\Delta_1\compo\Delta_2$ contains $0$,
since there is $\alpha\in \D(S)$ such that $\alpha$ and $\A(\alpha)$
are in the symmetric difference of $\Delta_1$ and $\Delta_2$
and $0=\alpha\compo\A(\alpha)$.
Then, as composition of restrictions of the identity coincides with intersection
in representable algebras 
(note that this holds even when meet is not in the signature),
\begin{align*}
\D(\phi(\upset(\Delta_1\compo s)))\cap\D(\phi(\upset(\Delta_2\compo s)))&=
\D(\phi(\upset(\Delta_1\compo s)))\compo\D(\phi(\upset(\Delta_2\compo s)))\\
&=\phi(\Dd(\upset(\Delta_1\compo s)))\compo\phi(\Dd(\upset(\Delta_2\compo s)))\\
&=\phi(\Dd(\upset(\Delta_1\compo s))\mathop{*}\Dd(\upset(\Delta_2\compo s)))\\
&=\phi(\upset(\Delta_1\compo\Delta_2))\\
&=\phi(S)\\
&=\varnothing
\end{align*}
as desired.

Next we show that if $s\neq t$ in $S$ then $\phi^\sharp(s)\neq \phi^\sharp(t)$. 
Without loss of generality, assume that $s\not\leq t$.  
We show that there is an ultrafilter $\Delta$ of $D(S)$ such that 
$\upset(\Delta\compo s)$ is an ultrasubset and 
$\upset(\Delta\compo s)\neq \upset(\Delta\compo t)$.  
As $\phi$ is a faithful representation, we then have 
$\phi(\upset(\Delta\compo s))\neq \phi(\upset(\Delta\compo t))$. 

We claim that the set $I=\set{\alpha\in D(S)\mid\alpha\compo s\leq\alpha\compo t}$ 
is an ideal in the boolean algebra of domain elements (Lemma~\ref{lem:BA}).   
For downward closure, suppose $\alpha\in I$ and $\alpha_0\leq\alpha$.    
Then $\alpha_0\compo s\leq\alpha\compo s\leq\alpha\compo t$, so 
$\alpha_0\compo s \leq\alpha_0\compo\alpha\compo t=\alpha_0\compo t$ by 
Lemma~\ref{lem:BA}\eqref{part:mon}, so $\alpha_0\in I$. 
Now suppose $\alpha, \beta\in I$.   We have $\alpha\compo t\geq\alpha\compo s$ and
$\beta\compo t\geq\beta\compo s$, hence 
$\alpha\compo s\leq (\alpha\vee\beta) \compo t\geq \beta\compo s$
by Lemma~\ref{lem:BA}\eqref{part:vee2}.  
By Lemma~\ref{lem:BA}\eqref{part:vee} it follows that 
$(\alpha\vee\beta)\compo t\geq(\alpha\vee\beta)\compo s$ so $\alpha\vee\beta \in I$, as required.  
So $I$ is an ideal.
Clearly also, it avoids $\D(s)$.  
Thus we may extend the principal filter of $\D(s)$ in $D(S)$ 
to an ultrafilter $\Delta$ disjoint from $I$.
Now $0\notin\Delta\compo s$ because if $\alpha\compo s=0$, 
then trivially $\alpha\compo s\leq \alpha \compo t$ so that 
$\alpha$ would be in $I$.  
Thus $\upset(\Delta\compo s)$ is an ultrasubset. 
Moreover, it is clear that $t\notin\upset(\Delta\compo s)$ 
(as then we would have $\beta\in \Delta$ with $\beta\compo s\leq \beta\compo t$,
contradicting the choice of $\Delta$).  
So by Lemma~\ref{lem:allultra}, it follows that either $\upset(\Delta\compo t)$ 
is equal to $S$ or it is disjoint from $\upset(\Delta\compo s)$.  
In either case there are $x,y\in X$ with $(x,y)$ related by 
$\phi(\upset(\Delta\compo s))$ but not by $\phi(\upset(\Delta\compo t))$.  
Since we showed that for $\Gamma\ne\Delta$,
$\phi(\upset(\Delta\compo s))$ and $\phi(\upset(\Gamma\compo t))$
have disjoint domains, it follows that
$(x,y)$ is related by $\phi^\sharp(s)$ but not by $\phi^\sharp(t)$ as required.

We show that $\phi^\sharp$ is a homomorphism.
We start with the preservation of~$\compo$.    
For all $s, t\in S$,
\begin{align*}
\phi^\sharp(s)\compo\phi^\sharp(t)&=
\left(\bigcup_{0\notin\Delta;s} \phi\left(\upset(\Delta\compo s)\right)\right)
\compo \left(\bigcup_{0\notin\Gamma;t} 
\phi\left(\upset(\Gamma\compo t)\right)\right)\\
&=\bigcup_{0\notin\Delta;s} \bigcup_{0\notin\Gamma;t}
\phi\left(\upset(\Delta\compo s)\right)\compo
\phi\left(\upset(\Gamma\compo t)\right)\\
&=\bigcup_{0\notin\Delta;s,\; 0\notin\Gamma;t}
\phi\left(\upset(\Delta\compo s)
\mathop{*}\upset(\Gamma\compo t)\right)\\
&=\bigcup_{0\notin\Delta;s,\; 0\notin\Gamma;t}
\phi\left(\upset((\Delta\compo s)\compo(\Gamma\compo t))\right)\\
&=\bigcup_{0\notin\Delta;s;t}\phi\left(\upset(\Delta\compo s\compo t)\right)\\
&=\phi^\sharp(s\compo t).
\end{align*}
Note that the penultimate equality uses the fact that 
$\upset(\Delta\compo s\compo \Gamma\compo t)=\upset(\Delta\compo s\compo t)$ 
when $0\notin \Delta\compo s\compo \Gamma\compo t$ 
(by Lemma~\ref{lem:ultrasubsetfacts}), and
$\phi(\upset(\Delta\compo s\compo \Gamma\compo t))=\phi(S)=\varnothing$
when $0\in \Delta\compo s\compo \Gamma\compo t$.

Even though the operation $\D$ is only a derived operation in 
$(S,\compo,\A,\R)$, it is convenient to verify that
it is preserved before showing preservation of $\A$.  
We have 
\begin{align*}
\phi^\sharp(\D(s))&=
\bigcup_{0\notin\Delta;\D(s)} \phi\left(\upset(\Delta\compo \D(s))\right)\\
&=\bigcup_{0\notin\Delta;s} \phi(\upset\Delta)\\
&=\bigcup_{0\notin\Delta;s} \phi(\Dd(\upset(\Delta;s)))\\
&=\D\left(\bigcup_{0\notin\Delta;s} \phi(\upset(\Delta\compo s))\right)\\
&=\D(\phi^\sharp(s))
\end{align*}
as required.
The second equality uses the fact that 
$0\notin\Delta\compo \D(s)$ if and only if $0\notin \Delta\compo s$.

Now we check preservation of $\A$.  
As $\A(s)\compo s=0$ and $\phi^\sharp(0)=\phi(S)=\varnothing$ and $\compo$ is preserved, 
we must have $\dom(\phi^\sharp(\A(s)))\subseteq X\backslash \dom(\phi^\sharp(s))$. 
Let $x\in X\backslash \dom(\phi^\sharp(s))$, that is,  
$(x,x)\notin \D(\phi^\sharp(s))$.  
Now as $X$ is a union of the domains of elements of $\phi(S^\flat)$, 
it follows that there is an ultrafilter $\Delta$ of $\D(S)$ such that 
$x\in \dom(\phi(\upset\Delta))$, that is, $(x,x)\in\D(\phi(\upset\Delta))$.  
As $(x,x)\notin \D(\phi^\sharp(s))=
\bigcup\{\phi(\upset\Gamma)\mid 0\notin\upset(\Gamma\compo s)\}$, 
we must have $\upset(\Delta\compo s)=S$ showing that $\A(s)\in \Delta$.  
Then $(x,x)\in \bigcup_{\A(s)\in \Delta}\phi(\upset\Delta)=\phi^\sharp(\A(s))$ 
as required.

For preservation of $\R$ first use the fact that $s\compo \R(s)=s$ 
and the fact that $\compo$ is preserved and $\R(s)=\D(\R(s))$ 
to deduce that $\phi^\sharp(\R(s))$ is a restriction of the identity element 
whose domain contains the range of $\phi^\sharp(s)$.  
So $\R(\phi^\sharp(s))\subseteq \phi^\sharp(\R(s))$.  

For the other direction assume that $(y,y)\in \phi^\sharp(\R(s))$.  
We show that there is $x\in X$ with $(x,y)\in\phi^\sharp(s)$.  
Now, as $(y,y)\in \phi^\sharp(\R(s))$, there is $\Delta$ with 
$(y,y)\in \phi(\upset(\Delta\compo \R(s)))$.  
Hence $\R(s)\in\Delta$ and $\upset\Delta=\upset(\Delta\compo \R(s))$.  
Consider the filter in $D(S)$ generated by 
$F:=\{\A(\alpha)\mid \R(\alpha\compo s)\notin \Delta\}$.  
To show this is a proper filter, observe that 
$\R(\alpha\compo s)\vee\R(\beta\compo s)\refe{eq:antirange}
\R((\alpha\vee\beta)\compo s)$.   So if $\A(\alpha)$ and $\A(\beta)$ are in $F$
then (as $\Delta$ is a prime filter) we have 
$\R(\alpha\compo s)\vee\R(\beta\compo s)$ not in $\Delta$, whence
$\A(\alpha)\cdot\A(\beta)=\A(\alpha\vee\beta)\in F$.  
Let $\Gamma$ be any ultrafilter of $D(S)$ extending $F$.  
If $0\in \Gamma\compo s$ then there is $\alpha\in \Gamma$ with 
$0=\alpha\compo s$ so that $\R(\alpha\compo s)=0$, which would give 
$\A(\alpha)\in F\subseteq \Gamma$, a contradiction.  
So $\upset(\Gamma\compo s)$ is an ultrasubset.  
Now we show that $\R(\upset(\Gamma\compo s))\subseteq \Delta$.  
Consider any $\alpha\in\Gamma$. Then as $\A(\alpha)\notin F$, 
it follows that $\R(\alpha\compo s)\in\Delta$.  
Thus $\Rr(\upset(\Gamma\compo s))=\upset\Delta$.  
Now recall that $\upset\Delta=\upset(\Delta\compo \R(s))$.  
Since $\phi$ preserves $\Rr$ as range and 
$(y,y)\in\phi(\upset(\Delta\compo \R(s)))=\phi(\upset\Delta)=\phi(\Rr(\upset(\Gamma\compo s)))$, 
there must be $x\in X$ with $(x,y)\in \phi(\upset(\Gamma\compo s))$.  
Then $(x,y)\in \phi^\sharp(s)$ as required.

We have not used $\cdot$ to establish the preservation of $\compo,\A,\R$, 
so if $\meet$ is not present then the proof of Theorem~\ref{thm:main} is complete.  

Finally we must verify preservation of $\meet$ when it is present. 
Using that meet is interpreted as intersection in representable algebras and
Lemma~\ref{lem:ultrasubsetfacts} part~(iii),
\begin{align*}
\phi^\sharp(s)\meet\phi^\sharp(t)&=
\phi^\sharp(s)\cap\phi^\sharp(t)\\
&=\left(\bigcup_{0\notin\Delta;s}\phi(\upset(\Delta\compo s))\right)\cap
\left(\bigcup_{0\notin\Gamma;t}\phi(\upset(\Gamma\compo t))\right)\\
&=\bigcup_{0\notin\Delta;s,\;0\notin\Gamma;t}\phi(\upset(\Delta\compo s))\cap
\phi(\upset(\Gamma\compo t))\\
&=\bigcup_{0\notin\Delta;s,\;0\notin\Gamma;t}\phi(\upset(\Delta\compo s))\wedge
\phi(\upset(\Gamma\compo t))\\
&=\bigcup_{0\notin\Delta;s,\; 0\notin\Gamma;t}\phi\left(\upset((\Delta\compo s)\meet
(\Gamma\compo t))\right).
\end{align*}
Using Lemma~\ref{lem:ultrasubsetfacts} part~(iv),
\begin{equation*}
0\notin (\Delta\compo s)\meet (\Gamma\compo t)
\mbox{ if and only if } \Delta=\Gamma \mbox{ and }
0\notin \Delta\compo (s\meet t)
\end{equation*}
whence
\begin{equation*}
\bigcup_{0\not\in\Delta;s,\;\Gamma;t}\phi\left(\upset((\Delta\compo s)\meet
(\Gamma\compo t))\right)=
\bigcup_{0\not\in\Delta;(s\cdot t)}\phi(\upset(\Delta\compo(s\meet t)))=
\phi^\sharp(s\meet t)
\end{equation*}
as desired.
\end{proof}

\section{Equational Theory}
Let $\tau\subseteq \{\compo, \cdot, \sqcup, \D, \R, \A, \Fix, 0, 1'\}$.  
A \emph{term} is either $0$, a single variable symbol, or recursively 
$(t_1\compo t_2), (t_1\cdot t_2), (t_1\sqcup t_2), \D(t), \R(t), \A(t)$ or $\Fix(t)$ 
(if the relevant operations are in $\tau$), where $t_1, t_2, t$ are terms. 
We write $t(\bar x)$ for a term using only variables in the $n$-tuple of variables $\bar x$, 
but not necessarily all of them.  For $\SS\in\F(\tau)$ and a $n$-tuple $\bar a$ of elements
(functions) in $\SS$, we interpret $t(\bar a)$ as a partial function over the base of $\SS$ 
as we explained in section 1.   An \emph{equation} has the form $t(\bar x)=s(\bar x)$, 
where $\bar x$ is an $n$-tuple of variables and $t(\bar x), s(\bar x)$ are terms.    
It is valid if for every $\SS\in\F(\tau)$ and every $n$-tuple $\bar a$ of elements of $\SS$, 
the partial functions $t(\bar a)$ and $s(\bar a)$ are identical. 
(For the sake of simplicity, we will call $t(\bar a)$ etc.\ terms as well.)

\begin{theorem}  \label{thm:eq th}
Let $\tau\subseteq \{\compo, \meet, \sqcup,\D, \R, \A, \Fix,0,1'\}$ and
$\Sigma_\tau$ be the set of equations valid over $\F(\tau)$.
\begin{enumerate}
\item 
$\Sigma_\tau$ is  {\bf co-NP}.
\item  
If $\{\compo, \A\}\subseteq\tau$ then $\Sigma_\tau$ is {\bf co-NP-complete}.
\end{enumerate}
\end{theorem}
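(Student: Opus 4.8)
The plan is to attack the two parts by separate strategies: a combinatorial small-model argument for the co-NP upper bound of part~(1), and a reduction from a known co-NP-complete problem for the hardness in part~(2).

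For part~(1), to show $\Sigma_\tau$ is co-NP it suffices to exhibit, for each invalid equation $t(\bar x)=s(\bar x)$, a polynomially-sized certificate of invalidity that can be checked in deterministic polynomial time. The natural certificate is a small algebra of functions $\SS\in\F(\tau)$ together with a tuple $\bar a$ of functions witnessing $t(\bar a)\neq s(\bar a)$. First I would observe that if an equation fails, it fails on some single pair $(x,y)$ in the base set $X$, in the sense that $(x,y)$ lies in exactly one of the two interpreted functions. The key step is a \emph{small model property}: I would argue that whenever $t(\bar a)\neq s(\bar a)$ in some representation, one can restrict attention to the finitely many base points reachable from a single witnessing point $x$ by the (sub)terms occurring in $t$ and $s$. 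Since every operation in the signature is built from composition, (anti)domain, range, intersection and preferential union --- all of which, on a given input point, either follow the existing partial function or stay at diagonal points --- the set of base points ``touched'' during the evaluation of the two terms at a fixed input is bounded by the number of subterms, hence polynomial in $|t|+|s|$. Restricting $X$ to this reachable set and correspondingly restricting each $a_i$ yields a function algebra whose base and whose generating functions are of size polynomial in the length of the equation. Guessing this restricted witness and then checking termwise evaluation of $t$ and $s$ at the distinguished point is a deterministic polynomial computation, so invalidity is in NP and validity is in co-NP.

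The main obstacle will be part~(1), specifically making the reachability/evaluation bound uniform across the full signature including $\sqcup$, $\A$ and $\Fix$. Antidomain and fixset introduce a universal quantifier (``for all $y$, $(x,y)\notin f$'' and ``$(x,x)\in f$''), so a witness of invalidity must certify not only which points \emph{are} related but also, locally, that certain points are \emph{not} in the domain; the delicate point is checking that restricting the base does not spuriously create or destroy antidomain points. I would handle this by making the certificate record, for each relevant base point and each generator, whether it lies in the domain, and then verifying that the restriction respects these assignments consistently --- this is still a polynomial amount of data to guess and verify. Preferential union is similarly local (its value at a point is determined by whether the first argument is defined there), so it fits the same framework.

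For part~(2), assuming $\{\compo,\A\}\subseteq\tau$, I would prove co-NP-hardness by reducing from the complement of a standard NP-complete problem, most naturally propositional satisfiability (equivalently, tautology-checking is co-NP-complete). The idea is that the boolean algebra $D(S)$ of domain elements guaranteed by Lemma~\ref{lem:BA} lets us encode propositional logic: using $\compo$ as boolean meet and $\A$ as boolean complement on domain elements, together with the join $\alpha\vee\beta:=\A(\A(\alpha)\compo\A(\beta))$, an arbitrary boolean formula $\varphi(p_1,\dots,p_n)$ translates into a term $t_\varphi$ in variables interpreted as domain elements. I would then show that $\varphi$ is a propositional tautology if and only if the equation $t_\varphi=1'$ is valid over $\F(\tau)$ --- one direction because every assignment of truth values corresponds to choosing each variable to be $0$ or $1'$ in a two-element function algebra, and the other because a falsifying assignment yields a representation in which $t_\varphi$ is not the full identity. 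This reduction is clearly polynomial-time computable, which establishes co-NP-hardness; combined with part~(1) it gives co-NP-completeness.
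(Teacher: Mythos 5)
Your overall strategy coincides with the paper's (a small-model property obtained by restricting to a linearly bounded set of base points for part~(1), and a reduction from propositional validity through the boolean algebra of domain elements for part~(2)), but part~(1) as you argue it has a genuine gap: the range operation. Your justification that every operation ``on a given input point, either follows the existing partial function or stays at diagonal points'' is false for $\R$. Whether $\R(s)$ is defined at a point $x$ is an existential condition about \emph{preimages}: it requires some $y$ with $s(y)=x$, and such a $y$ is not reachable by forward evaluation from $x$. Consequently, cutting the base down to the forward-reachable set can destroy the only preimage and change the value of the relativized term: if $s=\set{(y,x)}$ with $y\neq x$, then $\R(s)$ is defined at $x$ but $\R(s)\restr{\set x}$ is not (this is precisely the paper's warning that $t(\bar a)\restr Y\neq t(\bar a)\cap(Y\times Y)$ in general). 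The paper repairs this by building the witness set $\Sigma(t,x)$ with an explicit backward-witness clause: $\Sigma(\R(s),x)=\Sigma(s,y)$ for an arbitrarily chosen $y$ with $s(y)=x$ (and $\set x$ if no such $y$ exists), which keeps the bound linear in the term length while making the inductive claim $t(x)=t\restr Y(x)$ go through. Your proposal contains no such device, and your ``main obstacle'' discussion worries about $\A$, $\Fix$ and $\sqcup$ instead --- but those are the easy cases: once the base is forward-closed under the points touched by subterms, definedness of subterms is preserved, so relativizing $\A$ to the new base (the paper's $\A_Y$) handles antidomain automatically, and your extra bookkeeping of per-generator domain membership is unnecessary. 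The operation your argument actually fails on, $\R$, is never addressed.

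Part~(2) matches the paper's reduction in outline but has a fixable slip: you cannot decree that equation variables are ``interpreted as domain elements,'' since validity of $t_\varphi=1'$ quantifies over \emph{all} assignments of functions. You must coerce, replacing each proposition $p$ by $\D(f_p)$ (definable as $\A(\A(f_p))$ since $\A\in\tau$), exactly as the paper does. Without the coercion the tautology-to-validity direction fails: for the tautology $\neg(p\wedge\neg p)$ the uncoerced term is $\A(x\compo\A(x))$, and taking $x=\set{(a,b)}$ on base $\set{a,b}$ gives $x\compo\A(x)=\set{(a,b)}$, hence $\A(x\compo\A(x))=\set{(b,b)}\neq 1'$. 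Note also that the two clauses you offer for the ``if and only if'' both argue the same direction (a falsifying assignment yields a counterexample via $0/1'$ values); the converse direction is what needs the coercion, after which it is immediate from Lemma~\ref{lem:BA}, since the coerced term takes values in the boolean algebra $D(S)$ and a tautology evaluates to the top element $1'$ there.
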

\begin{proof}
For the first part, let $\bar x$ be an $n$-tuple of variables and suppose the equation 
$u(\bar x)=v(\bar x)$ is not valid over $\F(\tau)$.  Then there is $\mathscr S\in\F(\tau)$ 
and $u(\bar a)\neq v(\bar a)$, for some $n$-tuple $\bar a=(a_0, a_1, \ldots, a_{n-1})$ 
of elements in $\SS$. Let $X$ be the base of~$\SS$. 
The two distinct functions $u(\bar a), v(\bar a)$ must disagree on at least one point  of $X$ 
(disagreement includes the possibility that one partial function is defined at this point
but not the other).    
The plan is to find a finite subset $Y\subseteq X$ of size no more than the sum of the 
lengths of the terms $u(\bar a), v(\bar a)$ such that the equation already fails in an 
algebra $\SS\restr Y$ with base set $Y$,  obtained from $\SS$ by restricting all functions 
and operations to $Y\times Y$.  The first part of the theorem will follow, 
since a non-deterministic machine could check to see if the equation $u(\bar x)=v(\bar x)$ 
failed in any algebra of functions on a base of size at most $|u(\bar a)|+|v(\bar a)|$
in quadratic time.

Let us clarify what we mean by $\SS\restr Y$.   
For any $Y\subseteq X$ we let $\SS\restr Y$ be the algebra of functions 
$\set{f\cap (Y\times Y):f\in\SS}$ with the operations 
$\{\compo, \cdot, \D, \R, \A_Y, \Fix, \varnothing, 1'_Y\}$ 
(if the relevant operation is included in $\tau$), where 
\begin{align*}
1'_Y&=\set{(y, y)\mid y\in Y}\\ 
\A_Y(f)&=\set{(y, y)\mid y\in Y,\;  \neg(\exists z\in Y)(y, z)\in f}
\end{align*}
(the other operations do not need relativizing, since $\SS\restr Y$ 
is already closed under composition, intersection, domain, range and fixset).  

We consider terms $t(\bar a)$ constructed from $a_0, a_1, \ldots, a_{n-1}$ 
using operations in $\tau$.
Such a term is directly  evaluated in $\SS$ using the set-theoretically defined operations 
of $\SS$.  We write $t(\bar a)\restr Y$ for that function in $\SS\restr Y$ obtained 
from the tuple of elements 
$(a_0\cap(Y\times Y), a_1\cap(Y\times Y),\ldots, a_{n-1}\cap(Y\times Y))$, 
using the (relativized) constants and operations of $\SS\restr Y$.  
Warning: it is not in general true that $t(\bar a)\restr Y=t(\bar a)\cap (Y\times Y)$, 
for example let $a$ be the function $\set{(y, z)}$ (some $y\neq z\in X$) and let $Y=\set y$, 
then $a\restr Y$ is empty and hence $(\D (a))\restr Y=\varnothing$, 
whereas $\D (a)\cap (Y\times Y)=\set{(y, y)}$.

For $x\in X$ and any term $t(\bar a)$ using only the elements $a_0,\ldots, a_{n-1}$ of $\SS$,
we construct a finite subset $\Sigma(t(\bar a), x)$ of $X$.  
When applied to the terms $u$ and $v$ selected above 
(and for which $u(\bar{a})$ and $v(\bar{a})$ disagree at some point), 
this will provide a small finite subset of $X$ in which $u(\bar{a})\neq v(\bar{a})$ 
remains witnessed.  The construction is by induction on the complexity of terms.
\begin{align*}
\Sigma(0, x) &=\set x\\
\Sigma(1', x)&=\set x\\
\Sigma(a_i, x)&=\left
\{\begin{array}{ll}     \set{x, a_i(x)}&\mbox{if $a_i$ is defined on $x$}\\ 
\set x&\mbox{otherwise}\end{array}\right. (i<n)\\
\Sigma(\Fix(t(\bar a)), x)&=\set x\\
\Sigma(\D(t(\bar a)), x)&=\Sigma(\A(t(\bar a)), x)=\Sigma(t(\bar a), x)\\
\Sigma(\R(t(\bar a)), x)&=\left
\{\begin{array}{ll}\Sigma(t(\bar a), y) &\mbox{some arbitrary $y$ with $t(\bar a)(y)=x$}\\ 
\set x&\mbox{if no such $y$ exists}\end{array}\right.\\
\Sigma(s(\bar a)\compo t(\bar a), x)&=\left\{\begin{array}{ll}
\Sigma( s(\bar a), x)\cup\Sigma(t(\bar a), s(\bar a)(x))&
\mbox{if $(s(\bar a)(x))$ is defined}\\ 
\Sigma(s(\bar a), x)&\mbox{otherwise}\end{array}\right.\\
\Sigma(s(\bar a)\cdot t(\bar a), x)&=\Sigma(s(\bar a), x)\cup\Sigma(t(\bar a), x)\\
\Sigma(s(\bar a)\sqcup t(\bar a), x)&=\Sigma(s(\bar a), x)\cup\Sigma(t(\bar a), x)
\end{align*}
Clearly, the size of $\Sigma(t(\bar a), x)$ is no bigger than twice the length of the term 
$t(\bar a)$.  We may drop the $\bar a$ and refer to a term simply as $t$.

Observe that $x\in\Sigma(t,x)$ and that $t(x)\in \Sigma(t,x)$ whenever $t$ is defined on $x$.

We claim that, for any $Y$ with $\Sigma(t, x)\subseteq Y\subseteq X$ and $x\in \Sigma(t,x)$,
\begin{equation}\label{eq:restr}
t(x)= t\restr Y(x)
\end{equation}
including that $t(x)$ is defined iff $t\restr Y(x)$ is defined.

For the base cases,  observe that $0$ is the empty function in both $\SS$ and $\SS\restr Y$,
$1'(x)=1'_Y(x)=x$, since $x\in \Sigma(1', x)\subseteq Y$, and for $t=a_i$ (some $i<n$), 
$a_i$ is defined at $x$ iff $a_i\restr Y$ is defined at $x$ 
(since $\set{x, a_i(x)}\subseteq\Sigma(a_i, x)\subseteq Y$) and if defined they are equal.

Next suppose $t=r\cdot s$ (some terms $r,s$).  Then $r\cdot s$ is defined at $x$ iff 
$r(x)=s(x)$ ($=y$, say) iff $r\restr Y(x)=s\restr Y(x)=y$ (inductively) iff 
$(r\cdot s)\restr Y(x)=y$.    
Similarly $r\sqcup s$ is defined at $x$ (say $(r\sqcup s)(x)=y$) iff either $r(x)=y$ or $r$ is not defined at $x$ but $s(x)=y$.  This holds if and only if  $r\restr Y(x)=y$ or $r\restr Y$ is not defined at $x$ but $s\restr Y(x)=y$ (inductively) iff $(r\sqcup s)\restr Y(x)=y$.
The case $t=\Fix(s)$ (some term $s$) is also similar (after all $\Fix(v) = 1'\cdot v$).

Consider the case  $t=\A (s)$ (some term $s$).  
We have: $(\A (s))(x)$ is defined and equal to $x$ iff $s(x)$ is not defined iff 
(by induction) $s\restr Y(x)$ is not defined iff $(\A s)\restr Y(x)$ is defined and equal to $x$, 
since $\Sigma(s, x)=\Sigma(\A (s), x)\subseteq Y\subseteq X$. 
The case $t=\D(s)$ is  similar.  

Now let $t=r\compo s$ (some terms $r, s$).  If $r\compo s$ is defined at $x$ then 
$(r\compo s)(x)=s(r(x))$ and $\set{x, r(x), s(r(x))}\subseteq\Sigma(r\compo s, x)\subseteq Y$.  
So $r\restr Y(x)=r(x)$ is defined and $s\restr Y(r(x)) =s(r(x))$ is also defined, 
hence $(r\compo s)\restr Y(x)=s(r(x))$ is defined.  
Conversely, if $(r\compo s)\restr Y(x)$ is defined then $r(x)=r\restr Y(x)$ is defined 
and $s\restr Y(r(x))=s(r(x))$ is also defined, so $(r\compo s)(x)=(r\compo s)\restr Y(x)$ is 
also defined.

Finally consider the case $t=\R (s)$ (some term $s$).  If $\R (s)$ is defined at $x$ 
then by definition of $\Sigma(\R (s), x)$ there is $y\in\Sigma(\R (s), x)$ with $s(y)=x$
and $\Sigma(\R (s), x)=\Sigma(s, y)$.  By induction, $s\restr Y(y)=x$, so $(\R (s))\restr Y(x)=x$.
Conversely, if $(\R (s))\restr Y(x)$ is defined then there is $y\in Y$ with $s\restr Y(y)=x$, 
hence $s(y)=x$ and $(\R (s))(x)=x$, as required.  This proves the claim~\eqref{eq:restr}.

Now recall that $u(\bar x)=v(\bar x)$ was an equation failing in $\SS$ under some assignment  
mapping the tuple of variables $\bar x$ to the tuple of elements $\bar a$ of $\SS$ such that the 
two functions $u(\bar a), v(\bar a)$ are distinct.  That is, $u(\bar a)$ and $v(\bar a)$ disagree 
at some point, say $u(\bar a)$ disagrees with $v(\bar a)$ at $x_0\in X$.  
Let $Y=\Sigma(u(\bar a), x_0)\cup\Sigma(v(\bar a), x_0)$.  By the claim, 
$u(\bar a)\restr Y(x_0)$ agrees with $u(\bar a)(x_0)$ (both defined and equal or neither defined) 
and $v(\bar a)(x_0)$ agrees with $v(\bar a)\restr Y(x_0)$, hence $u(\bar a)\restr Y$ 
disagrees with $v(\bar a)\restr Y$ at $x_0$.  So  the equation $u(\bar x)=v(\bar x)$ fails 
in $\SS_Y$ under the assignment mapping $x_i$ to $a_i\cap(Y\times Y)$, for $i<n$.

Thus an equation $u(\bar x)=v(\bar x)$ fails to be valid over $\F(\tau)$ if and only if it fails 
in some algebra $\mathscr{S}$ of functions on a base $Y$ whose size is linear in terms of the 
equation.  
It follows that we can test the failure of equations by non-deterministically generating 
a labelled directed graph of this size and verifying if the equation fails.

For the second part, we reduce the validity problem for propositional formulas 
(see~\cite[\S A9]{garjoh} for example)
to membership of $\Sigma_\tau$.    
We may assume that our propositional language includes only the connectives $\neg, \wedge$.
Take a propositional formula $\phi$ and replace each proposition $p$ by $\D(f_p)$ 
for some function symbol $f_p$ unique to $p$ and 
replace $\neg$ and $\wedge$ by $\A$ and $\compo$, respectively, to obtain a term $\phi^*$.  
The required reduction maps $\phi$ to the equation $\phi^*=1'$.  
This reduction is correct  by Lemma~\ref{lem:BA}.
\end{proof}

As mentioned in the introduction, one cannot hope for such a result when maximal iterate 
is included in the signature, as the equational theory is known to be 
$\Pi_1^1$-hard~\cite{goljac}, at least in signatures containing $\{\compo,\Fix,\A,^\uparrow\}$.

\section{Finite representation for domain, range and composition}
We now revisit Schein's representation for the signature $(\compo,\D, \R)$ 
and present an identification 
that yields a finite representation in the case of finite algebras.  
Using the  results from previous sections of this article, we then obtain 
a finite representation for finite representable algebras in  various meet-free signatures
extending $(\compo, \D, \R)$, see Theorem~\ref{thm:last} below.

Let us recall Schein's representation for  appropriate algebras of the signature 
$\tau=\{\compo,\D,\R\}$.
Let ${\mathscr S}=( S,\compo,\D,\R)$, possibly with $0$, 
be an associative algebra satisfying \eqref{eq:leftid}--\eqref{eq:Dtwisted} 
and \eqref{eq:rightid}--\eqref{eq:DR}.  
The base of  Schein's representation consists of certain finite sequences
of  elements from $S\backslash\{0\}$.

For $n\geq -1$, a sequence $(a_0,b_0,\dots,a_{n},b_{n},a_{n+1})\in S^{2n+3}$ is 
\emph{permissible} if $\R(a_i)=\R(b_i)$ and $\D(b_i)=\D(a_{i+1})$, for all $i\leq n$.
A permissible sequence $(a_0,b_0,\dots,a_n,b_n,a_{n+1})$ reduces to a sequence 
$(a_0,b_0,\dots,a_n\compo x)$ if $b_n\compo x=a_{n+1}$.  See Figure \ref{fig:reduction}.
\begin{figure}
\begin{center}
\begin{tikzpicture}
	 \draw [->] (0,0) -- (0.95,1);
	 \draw [->] (2,0) -- (1.05,1);
	 \draw [->] (2,0) -- (2.95,1);
	 \draw [->] (3.2,.8) -- (3.05,1);
	 
	\node at (0.2,0.6) {$a_0$};
	\node at (1.75,0.65) {$b_1$};
	\node at (2.3,0.6) {$a_1$};
	
	 \draw [->] (5,0) -- (5.95,1);
	 \draw [-] (5,0) -- (4.8,0.2);
	 \draw [->] (7,0) -- (6.05,1);
	 \draw [->] (7,0) -- (7.95,1);
	 \draw [->] (6,1.1) -- (7.9,1.1);
	 
	\node at (5.2,0.6) {$a_n$};
	\node at (6.75,0.65) {$b_n$};
	\node at (8.2,0.6) {$a_{n+1}$};
	\node at (7,1.3) {$x$};
	
	  \draw [fill] (0,0) circle [radius=0.1]; 
	  \draw [fill] (1,1.1) circle [radius=0.1]; 
	  \draw [fill] (2,0) circle [radius=0.1]; 
	  \draw [fill] (3,1.1) circle [radius=0.1]; 
	  \draw [fill] (5,0) circle [radius=0.1]; 
	  \draw [fill] (6,1.1) circle [radius=0.1]; 
	  \draw [fill] (7,0) circle [radius=0.1]; 
	  \draw [fill] (8,1.1) circle [radius=0.1]; 
	     
	\node at (4,0.6) {$\dots$}; 
 \end{tikzpicture}\end{center}
 \caption{\label{fig}A permissible sequence $(a_0,b_1,a_1,\dots,a_n,b_n,a_{n+1})$ 
 where $b_n\compo x=a_{n+1}$.  This sequence can be reduced to 
 $(a_0,b_1,a_1,\dots,a_n\compo x)$.}\label{fig:reduction}
 \end{figure}
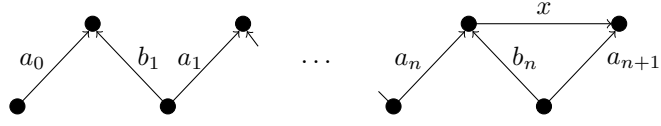

The sequence $(a_0,b_0,\dots, b_{n-1}, a_n\compo x)$ is itself permissible 
because of the following. 
Notice that $\D(a_n\compo x)\leq \D(a_n)=\D(b_{n-1})$ by \eqref{eq:Dorder}.  
Also, 
$b_n\compo \D(x)\refe{eq:Dtwisted}\D(b_n\compo x)\compo b_n=
\D(a_{n+1})\compo b_n=\D(b_n)\compo b_n=b_n$.  
Thus $\D(x)\geq \R(b_n)=\R(a_n)$.  So 
$\D(a_n\compo x)\refe{eq:leftcong}\D(a_n\compo \D(x))=\D(a_n)=\D(b_{n-1})$, 
showing that $(a_0,b_0,\dots,a_n\compo x)$ is permissible.  
Reduction is easily shown to be unique using the implication~\eqref{eq:schein}: 
$b\compo x=b\compo y\Rightarrow \R(b)\compo x=\R(b)\compo y$.

A permissible sequence is \emph{reduced} if no reductions are possible.  
Given a sequence $\alpha$, we denote the unique reduced sequence obtained 
by reducing $\alpha$ by $\nf(\alpha)$, the \emph{normal form} of $\alpha$.
Schein defines a representation $\theta$ on sequences in normal form as follows.
For an element $a$ of $S$, the representation $a^\theta$ of $a$ will be a function
with the following domain
$$
\dom(a^\theta)=\{(a_0,b_0,\dots,a_n,b_n,a_{n+1})\mid \D(a)\geq\R(a_{n+1})\}
$$ 
and then 
$$
a^\theta(a_0,b_0,\dots,a_n,b_n,a_{n+1})=\nf(a_0,b_0,\dots,a_n,b_n,a_{n+1}\compo a).
$$

It can be shown, with the aid of Figure~\ref{fig},  for any element $a$
and permissible sequence $(a_0,b_0,\dots,a_{n+1})$, that $a_{n+1}\compo \D(a)=a_{n+1}$ 
if and only if $\nf(a_0,b_0,\dots,a_{n+1})$  is in the domain of $a^\theta$, 
and that in this case, 
$\nf(a_0,b_0,\dots,a_{n+1}\compo a)=a^\theta(\nf(a_0,b_0,\dots,a_{n+1}))$,  
(for example, see~\cite[Lemmas~4.7,~4.8]{jacsto:DR} for full details).  
Hence
\begin{equation}\label{eq:nf}
a^\theta(\nf(a_0, b_0,\ldots, a_{n+1}))=\nf(a_0, b_0,\ldots, b_n, a_{n+1}\compo a)
\end{equation}
for any permissible sequence $(a_0, \ldots, a_{n+1})$ and $a\in S$ 
such that $\D(a)\geq\R(a_{n+1})$.

This representation preserves $\compo$, $\D$ and $\R$ (Schein~\cite{sch:DR})
as well as $0$ as $\varnothing$ (if $0=\D(0)$ is present). 
For example, to check $\R$, the reduced  permissible sequence $(a_0, \ldots, a_{n+1})$ 
is in the domain of the function $(\R(a))^\theta$ if and only if 
$\R(a_{n+1})\leq\D(\R(a))=\R(a)$ and in that case 
$(\R(a))^\theta(a_0, \ldots, a_{n+1})=(a_0, \ldots, a_{n+1})$. 
Also $a^\theta$ is defined on  
$\nf(a_0, \ldots, a_{n+1}, \; a\compo \R(a_{n+1}), \; \D(a\compo\R(a_{n+1})))$ 
and 
\begin{eqnarray*}
\lefteqn{a^\theta(\nf(a_0, \ldots, a_{n+1},\; a\compo\R(a_{n+1}), \; \D(a\compo\R(a_{n+1}))))}\\
&=&\nf(a_0, \ldots, a_{n+1},\; a\compo \R(a_{n+1}),\;\D(a\compo\R(a_{n+1}))\compo a)\\
&=&(a_0, \ldots, a_{n+1})
\end{eqnarray*}
using a reduction with $x=\R(a_{n+1})$, since 
$a\compo\R(a_{n+1})\compo\R(a_{n+1})=\D(a\compo\R(a_{n+1}))\compo a$.

For the other direction assume that
$\nf (a_0, \ldots, b_n, a_{n+1}\compo a)$ is in the range of $a^\theta$.
Since $(a_0, \ldots, b_n, a_{n+1}\compo a)=(a_0, \ldots, b_n, a_{n+1}\compo a\compo \R(a))$, 
applying $(\R(a))^\theta$ fixes $\nf(a_0, \ldots, b_n, a_{n+1}\compo a)$ (use~\eqref{eq:nf}).
Therefore 
$\R(a)^\theta$ is equal to the identity restricted to the range of the 
function $a^\theta$, as required.  
Similarly, $\D$ is represented correctly by~$\theta$.

This representation also preserves~$\meet$ if it is present and the 
appropriate axioms are satisfied (Jackson and Stokes~\cite{jacsto:DR}).  
Except in trivial cases there are infinitely many reduced permissible words over $S$, 
and then this representation is over an infinite domain, even when $S$ is finite.   
We now observe a further identification that for finite $S$ will produce a 
faithful representation over a finite domain for 
$\{\compo,\D,\R,0\}$, though not in general for~$\meet$.

\begin{theorem}\label{thm:dr}
Let $\SS$ be an associative $\set{\compo,\D, \R}$-algebra satisfying 
\eqref{eq:leftid}--\eqref{eq:Dtwisted} and \eqref{eq:rightid}--\eqref{eq:DR}. 
Then $\SS$ has a representation on a base of size at most $|S|^{1+|S|}$ 
and at most $(|S|-1)^{|S|}$ if there is a $0$ element with $\D(0)=0$.
\end{theorem}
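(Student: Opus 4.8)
The plan is to take Schein's faithful representation $\theta$ on the set $B$ of reduced permissible sequences over $S\backslash\set 0$ and collapse it through a finite-valued invariant, then check that the quotient is still a faithful $\set{\compo,\D,\R}$-representation. For a reduced permissible sequence $\alpha=(a_0,b_0,\dots,a_n,b_n,a_{n+1})$ write $\R(\alpha):=\R(a_{n+1})$ and define a total map $g_\alpha\colon S\to S$ by letting $g_\alpha(c)$ be the last entry of $\nf(a_0,\dots,b_n,a_{n+1}\compo c)$. Put $\Phi(\alpha):=(\R(\alpha),g_\alpha)$. Since $\R(\alpha)$ lives in $D(S)$ and $g_\alpha$ is a self-map of $S$, the image of $\Phi$ has size at most $|D(S)|\cdot |S|^{|S|}\le |S|^{1+|S|}$, and this image $\Phi(B)$ will be the base of the new representation.

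First I would isolate the single computation that drives everything. Because $\theta$ is a homomorphism, $(a\compo c)^\theta=a^\theta\compo c^\theta$, so reading off last entries gives the identity $g_{a^\theta(\alpha)}(c)=g_\alpha(a\compo c)$ for all $c\in S$; likewise the endpoint of $a^\theta(\alpha)$ is $g_\alpha(a)$, whence $\R(a^\theta(\alpha))=\R(g_\alpha(a))$, and $a^\theta$ is defined at $\alpha$ exactly when $\D(a)\ge\R(\alpha)$. Thus both the definedness and the value of $\Phi(a^\theta(\alpha))$ are determined by $\Phi(\alpha)$ and $a$ alone, so $\ker\Phi$ is a congruence for every right action $a^\theta$ (and trivially for the internally defined $\D,\R$). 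Consequently each $a^\theta$ descends to a well-defined partial function $\bar a^\theta$ on $\Phi(B)$, the assignment $a\mapsto\bar a^\theta$ preserves $\compo$ by the same functoriality, and it preserves $\D$ and $\R$ because $\theta$ does and these are just domain and range in the quotient function algebra.

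Next I would verify faithfulness of the quotient, the step requiring the most care, since a priori collapsing by $\ker\Phi$ could merge the actions of distinct elements. Suppose $\bar a^\theta=\bar b^\theta$. Comparing definedness on the length-one sequences $(\delta)$, $\delta\in D(S)$ (for which $\R((\delta))=\delta$), gives $\D(a)\ge\delta\iff\D(b)\ge\delta$ for all $\delta$, hence $\D(a)=\D(b)$. Evaluating at $\alpha=(\D(a))$, where $a^\theta(\alpha)=(\D(a)\compo a)=(a)$ and $b^\theta(\alpha)=(b)$ by \eqref{eq:leftid}, the equality $\Phi((a))=\Phi((b))$ yields $\R(a)=\R(b)$ from the range coordinate and $a\compo c=b\compo c$ for all $c\in S$ from $g_{(a)}=g_{(b)}$ (as $g_{(a)}(c)=a\compo c$); taking $c=\R(a)=\R(b)$ and applying \eqref{eq:rightid} gives $a=b$. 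So $a\mapsto\bar a^\theta$ is injective and $\SS$ is represented on a base of size at most $|S|^{1+|S|}$.

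The sharpening to $(|S|-1)^{|S|}$ in the presence of a zero with $\D(0)=0$ is a bookkeeping refinement rather than a new idea: Schein's sequences use only nonzero entries, one checks that $a_{n+1}\compo c\ne 0$ whenever $\D(c)\ge\R(\alpha)$, and $0$ itself can be used to mark undefinedness, so the range coordinate becomes recoverable from $g_\alpha$ and can be dropped while $g_\alpha$ is arranged to take values in $S\backslash\set 0$; counting such maps gives the stated bound. I expect the principal obstacle to be not any isolated calculation but the balancing act inside the faithfulness argument: the invariant $\Phi$ must be fine enough to separate distinct elements yet coarse enough to be congruential and finite, and it is precisely the identity $g_{a^\theta(\alpha)}(c)=g_\alpha(a\compo c)$ that reconciles these competing demands.
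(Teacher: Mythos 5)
Your proposal is correct and is essentially the paper's own proof: your invariant $\Phi(\alpha)=(\R(\alpha),g_\alpha)$ is exactly the paper's \emph{view} $\view(\bar a)=\{(x,\add(x^\theta(\bar a)))\mid \add(\bar a)\compo \D(x)=\add(\bar a)\}$ (which already determines the address, hence the coordinate $\R(\alpha)$), your key identity $g_{a^\theta(\alpha)}(c)=g_\alpha(a\compo c)$ is the paper's computation that $(z,c)\in\view(x^\theta(\bar a))$ iff $(x\compo z,c)\in\view(\bar a)$, and faithfulness is likewise reduced to length-one sequences, which distinct views never identify. The one slip is calling $g_\alpha$ total: when $\D(c)\not\geq\R(\alpha)$ the sequence $(a_0,\dots,b_n,a_{n+1}\compo c)$ is not permissible, so $g_\alpha$ is a partial map whose domain is determined by $\R(\alpha)$ --- precisely the paper's partial-function view --- and with this reading your counting and congruence arguments go through unchanged.
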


\begin{proof} 
(The reader may wish to follow this argument in conjunction with a reading of 
Example~\ref{eg:ab}, where some of the constructions of the proof are given for a simple example.)
Let us say that the \emph{address} of a sequence $(a_0,b_0,\dots,a_n,b_n,a_{n+1})$ is $a_{n+1}$,
and denote the address of sequence $\bar a$ by $\add(\bar a)$.  
The \emph{view} of a reduced permissible sequence $\bar a$ is the set 
\begin{align*}
\view(\bar a)
&=\{(x,y)\in S\times S\mid \bar a\in\dom(x^\theta)\mbox{ and }\add(x^\theta(\bar a))=y\}\\
&=\{(x,\add(x^\theta(\bar a)))\mid \add(\bar a)\compo \D(x)=\add(\bar a)\}.
\end{align*}
A view is a partial function from $S$ to $S$, hence when $S$ is finite, 
the number of views is at most $|S|^{1+|S|}$.  
If there is $0=\D(0)$ then we may replace $S$ by $S\backslash\{0\}$, 
giving at most $(|S|-1)^{|S|}$ views.

Let $\bar a=\nf(\bar a)=(a_0,b_0,\dots,a_n,b_n,a_{n+1})$ be a reduced sequence.
Then 
\begin{align*}
(\R(\add(\bar a)))^\theta(\bar a)&=
(\R(a_{n+1}))^\theta(a_0,b_0,\dots,a_n,b_n,a_{n+1})\\
&=\nf(a_0,b_0,\dots,a_n,b_n,a_{n+1}\compo\R(a_{n+1}))\\
&=\nf(a_0,b_0,\dots,a_n,b_n,a_{n+1})\\
&=\bar a.
\end{align*}
Hence $(\R(\add(\bar a)),\add(\bar a))\in\view(\bar a)$.
In particular, for a sequence $(c)$ of length $1$,
we have $(\R(c),c)\in\view(c)$ and
$\view(c)=\{(x,c\compo x)\mid \R(c)\leq\D (x)\}$.

Define an equivalence relation $\equiv$ on reduced permissible sequences by 
\begin{equation}
\bar a\equiv \bar b\mbox{ if }
\view(\bar a)=\view(\bar b).\label{eq:equiv}
\end{equation}
Since $(\R(\add(\bar a)),\add(\bar a))\in\view(\bar a)$, we have that
\[
\view(\bar a)=\view(\bar b)\mbox{ implies }
\add(\bar a)=\add(\bar b).
\]
In particular this shows that distinct permissible sequences of length $1$ lie 
in distinct equivalence classes modulo $\equiv$.
Also, if $S$ is a finite set then there are only finitely many possible views, 
so that $\equiv$ has finitely many blocks.
We now show that the functions $x^\theta$ preserve this equivalence relation, 
and that domains of functions $x^\theta$ are unions of blocks of the equivalence relation.  
Thus if $X$ denotes the set of reduced permissible words, 
then ${\mathscr S}$ is also represented on $X/{\equiv}$
by the map 
\begin{equation}\label{eq:Theta}
s^\Theta=(s^\theta/{\equiv}).
\end{equation}

Fix $x\in S$ and assume that $\bar a\equiv \bar b$.  
By the definition of view we have that $\bar a\in \dom(x^\theta)$ if and only if 
$\bar b\in \dom(x^\theta)$.  
This shows that the domain of $x^\theta$ is a union of blocks.  

Next we show that $x^\theta(\bar a)$ is equivalent modulo $\equiv$ to $x^\theta(\bar b)$.  
Let $(z,c)$ be in the view of $x^\theta(\bar a)$.  
So $x^\theta(\bar a)\in\dom(z^\theta)$ and $\add(z^\theta(x^\theta(\bar a)))=c$.  
That is, $\bar a\in\dom((x\compo z)^\theta)$  and $\add((x\compo z)^\theta(\bar a))=c$.  
Hence 
$((x\compo z),c)=((x\compo z), \add((x\compo z)^\theta(\bar a)))\in\view(\bar a)=\view(\bar b)$
as $\bar a$ and $\bar b$ have the same view.  
So $\bar b\in\dom((x\compo z)^\theta)$ and  $\add((x\compo z)^\theta(\bar b))=c$.
Thus  $x^\theta(\bar b)\in\dom(z^\theta)$ and $\add(z^\theta(x^\theta(\bar b)))=c$,
that is,  $(z,c)\in\view(x^\theta(\bar b))$, as required.

The faithfulness of the representation $\Theta$ of ${\mathscr S}$ on $X/{\equiv}$
follows from the faithfulness of the representation $\theta$ of ${\mathscr S}$ on $X$,
since this is witnessed over sequences of length $1$
and distinct length $1$ sequences are never equivalent.
\end{proof}

The following basic example may aid the reader.
\begin{example}\label{eg:ab}
Consider the 5-element algebra in the signature $\{0, \compo, \cdot,\D,\R\}$ 
consisting of elements $\{0,a,b,d,r\}$ with $0$, $d$ and $r$ domain elements 
and with $\D (a)=\D (b)=d$ and $\R (a)=\R (b)=r$.  
All elements are disjoint \up(meeting to $0$ under $\meet$\up) 
and the only nonzero products with respect to $\compo$ are those forced 
by the usual properties of $\D$ and $\R$, for example $d\compo a\compo r=a$. 
Note that
\[
\D(x)=\left\{  \begin{array}{ll}   r&\mbox{if }x=r\\
d & x\neq r\end{array}\right. \mbox{ and } 
\R(x)=\left\{\begin{array}{ll}d&\mbox{if }x=d\\ r&
x\neq d\end{array}\right.
\]
for non-zero $x$.
So, a sequence $(x_0, x_1, \dots, x_{2n})$  \up(where $n\geq 0$\up) of non-zero elements 
is permissible if 
\up{(i)} $(x_{2i}, x_{2i+1})\in\set{(d, d)}\cup\set{(w, z)\mid d\not\in\set{w, z}}$ and 
\up{(ii)} $(x_{2i+1}, x_{2i+2})\in\set{(r, r)}\cup\set{(w, z)\mid r\not\in\set{w, z}}$, for $i<n$.
Their equivalence relation $\equiv$ has six blocks, namely,  the singleton $\{(r)\}$, 
the singleton $\{(d)\}$, and for $s=(a), (b), (a,d), (b,d)$, the set of all permissible sequences
ending with the string $s$.
\end{example}

We now extend this finite representation result to larger signatures including antidomain.

\begin{theorem}\label{thm:last}
Let $\tau$ be a signature with 
$\set{\compo, \D, \R}\subseteq\tau\subseteq\set{\compo, \D, \A, \R, \Fix,0,1'}$ 
or with 
$\set{\compo, \A, \R,\sqcup}\subseteq\tau\subseteq
\set{\compo, \D, \A, \R, \Fix,\sqcup, {}^\uparrow, 0,1'}$.  
Every finite, representable $\tau$-algebra $\SS=(S, \tau)$ is representable over a base of size 
at most  $|S|^{|S|+1}$. 
\end{theorem}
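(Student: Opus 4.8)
The plan is to obtain the finite representation by combining the finite $\{\compo,\D,\R\}$-representation of Theorem~\ref{thm:dr} with the two reduction devices already built for Theorem~\ref{thm:main}: the view-quotient underlying Theorem~\ref{thm:dr} and the union-over-ultrafilters map $\phi^\sharp$. I would divide the argument according to whether $\A\in\tau$, since $\A$ is precisely the operation that the bare Schein representation fails to capture.

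Suppose first that $\A\notin\tau$ (possible only in the first displayed case). Then $\SS$ is a finite $\{\compo,\D,\R\}$-algebra satisfying the hypotheses of Theorem~\ref{thm:dr}, possibly enriched by $0$, $1'$ and $\Fix$, so I would apply Theorem~\ref{thm:dr} directly to $\SS$ and obtain its view-quotient representation $\Theta$ on a base of size at most $|S|^{|S|+1}$. It then remains to see that $\Theta$ also represents the constants and $\Fix$: the element $0$ is already sent to $\varnothing$, and $1'$, being the top domain element, is sent to the identity on the whole base. For $\Fix$ I would use that Schein's representation $\theta$ already represents $\Fix$ under its sound axioms (Jackson and Stokes~\cite{jacsto:DR}) and that the domain of each $x^\theta$, hence of $\Fix(s)^\theta$, is a union of $\equiv$-blocks; a point fixed by $s^\Theta$ must, by iterating $s^\theta$ and noting that reductions can only shorten sequences, admit a genuinely $s^\theta$-fixed representative, so that $\Fix(s)^\Theta$ is exactly the fixset of $s^\Theta$.

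Now suppose $\A\in\tau$, which covers the remaining possibilities in both displayed cases. Here $(S,\compo,\A,\R)$ is representable, so $D(S)$ is a Boolean algebra (Lemma~\ref{lem:BA}) and I would form $S^\flat$ as in Definition~\ref{defn:Sflat}. The crucial finiteness estimate is that, for finite $S$, each ultrasubset is a maximal proper down-directed filter (Lemma~\ref{lem:allultra}(ii)) and hence the up-set of a unique atom of the natural order; thus $S^\flat$ consists of the absorbing element $S$ (realising $0$) together with at most $|S|-1$ ultrasubsets, giving $|S^\flat|\le|S|$. Since $S^\flat$ is a finite $\{\compo,\D,\R\}$-algebra satisfying the axioms of Theorem~\ref{thm:dr} (it is representable by Lemma~\ref{lem:Sflat}) with a zero element $S$ fixed by $\Dd$, Theorem~\ref{thm:dr} gives a finite representation $\phi$ of $S^\flat$, with $S$ sent to $\varnothing$, on a base of size at most $(|S^\flat|-1)^{|S^\flat|}\le(|S|-1)^{|S|}\le|S|^{|S|+1}$. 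I would then run the $\phi^\sharp$ construction from the proof of Theorem~\ref{thm:main} verbatim on this finite $\phi$ (first shrinking the base to the union of the domains of $\phi(S^\flat)$); it uses only that $\phi$ faithfully represents $\compo,\D,\R$ and sends the zero to $\varnothing$, and so yields a faithful representation of $(S,\compo,\A,\R)$, together with $0$, $1'$ and $\Fix$, over the same finite base. Finally, for the signatures of the second displayed case, the ``moreover'' clauses of Theorem~\ref{thm:cupmi} finish the job without touching the base: once $\sqcup$ sits on top of a faithful $\{\compo,\A,\R\}$-representation and laws \eqref{eq:sqcupdom}, \eqref{eq:uniondef} hold, $\sqcup$ is automatically represented, and since $\SS$ is finite and \eqref{eq:mibasic}, \eqref{eq:dynamic} hold, so is ${}^\uparrow$.

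The step I expect to be the main obstacle is checking that the auxiliary operations, and especially $\Fix$, survive both reductions. The view-equivalence $\equiv$ and the map $\phi^\sharp$ are tailored respectively to $\{\compo,\D,\R\}$ and to $\A$, and each can in principle glue points in ways that are invisible to the other operations; since $\equiv$ identifies reduced sequences of different lengths, one must argue (as sketched above, via orbit/length descent for $\Theta$, and via the element-wise description of $\phi^\sharp$) that the represented fixset really coincides with the set-theoretic fixset. Once these compatibilities are secured, the estimate $|S^\flat|\le|S|$ makes the size bound $|S|^{|S|+1}$ immediate.
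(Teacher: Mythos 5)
Your proposal is correct and follows essentially the same route as the paper's proof: the same split on whether $\A\in\tau$, the same orbit/length-descent argument showing $\Fix$ survives the view-quotient $\equiv$, the same passage to the subalgebra of elements with atomic domain (the paper calls it $\SS^{at}$ and notes it is isomorphic to your $\SS^\flat$), the same union-over-atoms extension $\phi^\sharp$ from Theorem~\ref{thm:main}, and the same appeal to the ``moreover'' clauses of Theorem~\ref{thm:cupmi} for $\sqcup$ and ${}^\uparrow$. Your explicit verification that each ultrasubset of a finite algebra is the upset of a unique atom of the natural order (giving $|S^\flat|\le|S|$) is a detail the paper only states in passing, but it matches the paper's own remark preceding Lemma~\ref{lem:BA}.
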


\begin{proof}
First assume that $\A\notin \tau$. So we are considering 
$\set{\compo, \D, \R}\subseteq\tau\subseteq\set{\compo, \D, \R, \Fix,0,1'}$.
The case $\tau=\set{\compo,\D, \R}$ is covered by Theorem~\ref{thm:dr}, 
and it is clear that the representation $\Theta$ defined in~\eqref{eq:Theta}   
correctly represents $0$ and $1'$ if one or both of these are present.
We now show that $\Theta$ also already represents the operation~$\Fix$ 
correctly if $\Fix\in\tau$.
In~\cite{jacsto:DR} it was shown that, because the laws 
$\Fix(x)\compo x=\Fix(x)$, $\D(\Fix(x))=\Fix(x)$ and $x\compo y=x\Rightarrow x\compo\Fix(y)=x$
are satisfied, Schein's representation will correctly represent $\Fix$. 
We now observe that $\Fix$ is still correctly represented after applying the 
identification~$\equiv$.  

Consider an element $x$ and a reduced sequence $\bar a$ in the domain of $x^\theta$ 
such that $x^\Theta$ fixes the $\equiv$-class $[\bar a]$ of $\bar a$,
that is,  $\bar a\equiv x^\theta(\bar a)$.  
Thus the view of $x^\theta(\bar a)$ is identical to that of $\bar a$.  
In particular, $x^\theta(\bar a)$ has the same address as $\bar a$, 
and is either strictly shorter than $\bar a$ or is identical to $\bar a$.  
Let $a$ denote the address of $\bar a$, and  let ${\bar a}'$ denote $x^\theta(\bar a)$ 
and ${\bar a}''$ denote $x^\theta({\bar a}')$ and so on. 
Each element of  $\bar a,{\bar a}',{\bar a}'',\ldots$ has the same view as $\bar a$ 
(so in particular, the same address, $a$).  
The sequence $\bar a,{\bar a}',{\bar a}'',\ldots$ is eventually constant, 
so eventually we arrive at some $\bar b\equiv\bar a$ that is fixed by $x^\theta$.  
Because the address of $\bar b$ is $a$, we then have $a\compo x=a$, 
which gives $a\compo \Fix(x)=a$.
So, for any reduced permissible sequence $\bar c\equiv\bar a$,   
we have $x^\theta(\bar c)=\bar c$, since $\add(\bar c)=a$.  Hence
\begin{eqnarray*}
([\bar a],[ \bar a])\in \fix(x^\Theta)&\iff&x^\Theta([\bar a])=[\bar a]\\
&\iff&x^\theta(\bar c)=\bar c\;\;\mbox{(all $\bar c\equiv\bar a$)}\\
&\iff&(\bar c, \bar c)\in \fix(x^\theta)\;\;\mbox{(all $\bar c\equiv\bar a$)}\\
&\iff&(\bar c, \bar c)\in (\fix(x))^\theta\;\;\mbox{(all $\bar c\equiv\bar a$)}\\
&\iff&([\bar a], [\bar a])\in (\fix (x))^\Theta.
\end{eqnarray*}
This completes the proof for cases where $\A\notin\tau$.  

Now assume $\A\in\tau$ and let $\SS\in\F(\tau)$ be finite and representable.  
We will temporarily ignore $\sqcup$ and ${}^\uparrow$ if they are present.
Since $\set{\compo, \A}\subseteq\tau$ the set $D(S)$ of domain elements of $\SS$ 
forms a boolean algebra and since $\SS$ is finite, this boolean algebra is atomic.  
Consider the set 
$S^{at}=\set{s\in \SS: \D(s)\mbox{ is an atom  of $D(S)$}}\cup\set0$.
It is clear that $S^{at}$ is closed under all the operations of $\tau$ except $\A$, 
so let $\SS^{at}$ be the algebra with universe $S^{at}$ and operations in 
$\tau\cap\set{\compo, \D, \R, \Fix,0,1'}$ inherited from $\SS$  
(in fact it is isomorphic to the algebra $\SS^\flat$ of Definition~\ref{defn:Sflat}).  
By the previous part, $\Theta$ is a representation of $\SS^{at}$  with respect to the signature 
$\tau\cap\set{\compo, \D, \R, \Fix,0,1'}$  over a set of size at most 
$(|S^{at}|-1)^{|S^{at}|}\leq (|S|-1)^{|S|}$.
Now,  as in the proof of Theorem~\ref{thm:main}, we may extend $\Theta$ to a representation 
$\phi$ of $\SS$ over the same base by letting 
\[
s^\phi=\bigcup \{(d\compo s)^\Theta\mid d\in At(D(S))\}
\]
see \eqref{eq:phisharp}.
Similarly to the proof of Theorem~\ref{thm:main}, $\phi$ respects antidomain,
thus $\SS$ is representable over a set of size at most $(|S|-1)^{|S|}$.

All of this ignored $\sqcup$ and ${}^\uparrow$ if they were present in $\tau$ (with $\A\in \tau$).  
For these operations observe that if $\sqcup\in \tau$ then as $\SS\in \F(\tau)$, 
then laws \eqref{eq:sqcupdom} and \eqref{eq:uniondef} will hold, 
so that the ``moreover'' statement of Theorem~\ref{thm:cupmi} part~(1) ensures the correct 
representation of~$\sqcup$.  If both~$\sqcup$ and~${}^\uparrow$ are in $\tau$, 
then laws \eqref{eq:sqcupdom}--\eqref{eq:dynamic} hold and the ``moreover'' statement 
of Theorem~\ref{thm:cupmi} part~(2) shows that both~$\sqcup$  and~${}^\uparrow$ are correctly 
represented provided that~$\compo$ and $\A$ have been correctly represented 
(which we showed could be achieved on a set of size at most $(|S|-1)^{|S|}$). 
\end{proof}

Of course if $S$ is infinite, an application of the downward L\"owenheim--Skolem Theorem 
yields a representation on a base set of size at most $|S|$.

In general, intersection is not preserved by the representation method in Theorem~\ref{thm:dr}.
If $\bar a$ is in the domain of $x^\theta$ and $y^\theta$ with $x\cdot y=0$
then it is still possible that the view of $x^\theta(\bar a)$ coincides with 
the view of $y^\theta(\bar a)$.  
This occurs in Example~\ref{eg:ab} when $x=a$, $y=b$ and $\bar a=(a,b,d)$ for example.

Most of the cases in the following theorem can be extracted from~\cite{jacsto:modal}, 
but the proof here gives an alternative perspective.

\begin{theorem}
Let  $\compo\in\tau\subseteq\set{0, \compo, \cdot, \sqcup, \D, \A, \Fix, {}^\uparrow, 1'}$ 
\up(but no range operation\up).  
If $\SS\in\F(\tau)$ is finite then it has a representation on a base of size at most $|S|^3$.
\end{theorem}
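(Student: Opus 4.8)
The plan is to start from a concrete faithful representation of $\SS$ as partial functions on some (possibly large) base $X$, and to cut $X$ down to a \emph{forward-closed} subset $Y$ of size at most $|S|^3$ on which the restricted functions still form a faithful representation. Call $Y\subseteq X$ forward-closed if $s(Y\cap\dom(s))\subseteq Y$ for every $s\in S$; equivalently, $Y$ is a union of forward orbits $\mathrm{Orb}(x):=\{x\}\cup\{s(x):s\in S,\ x\in\dom(s)\}$, each of which is itself forward-closed (if $y=s(x)\in\mathrm{Orb}(x)$ and $y\in\dom(u)$ then $u(y)=(s\compo u)(x)\in\mathrm{Orb}(x)$) and has at most $|S|+1$ elements. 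The representation I would use is simply $s\mapsto s\restr Y:=s\cap(Y\times Y)$, with all operations relativised to $Y$ exactly as in the treatment of $\SS\restr Y$ in the proof of Theorem~\ref{thm:eq th}.

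The heart of the argument is that restriction to a forward-closed $Y$ is a homomorphism for every operation in $\tau$. Forward-closure gives $\dom(s\restr Y)=\dom(s)\cap Y$ for each $s$ (no point of $Y$ is dropped because its image escapes $Y$). For $\compo$, $\meet$, $\D$, $\Fix$, $0$ and $1'$ this is immediate; the delicate cases are the forward-looking operations $\A$, $\sqcup$ and ${}^\uparrow$. For antidomain, the relativised operation $\A_Y$ of the proof of Theorem~\ref{thm:eq th} gives $\A_Y(s\restr Y)=\{(y,y):y\in Y,\ y\notin\dom(s\restr Y)\}$, and since $\dom(s\restr Y)=\dom(s)\cap Y$ this equals $\A(s)\cap(Y\times Y)=(\A(s))\restr Y$; thus the warning in that proof that $\A_Y$ need not agree with ambient antidomain is voided precisely by forward-closure. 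An equally routine check handles $\sqcup$ (which only inspects whether the first function is defined — a forward condition) and ${}^\uparrow$ (on a forward-closed $Y$ the $f$-trajectory of any $y\in Y$ stays in $Y$, so $(f\restr Y)^\uparrow=f^\uparrow\restr Y$). This step is exactly where the hypothesis ``no range operation'' is used: $\R$ needs \emph{backward} witnesses, and indeed $\R_Y(s\restr Y)$ can be a proper subidentity of $(\R(s))\restr Y$ once a point of $Y$ has all its $s$-preimages outside $Y$, so the method genuinely breaks when $\R\in\tau$.

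It remains to choose $Y$ forward-closed, faithful and small. For each pair of distinct elements $s\neq t$ of $S$ the functions $s,t$ disagree at some $x_{s,t}\in X$ (one defined there and the other not, or both defined with different values), and I would take
\[
Y:=\bigcup_{s\neq t}\mathrm{Orb}(x_{s,t}).
\]
Being a union of forward orbits, $Y$ is forward-closed, and since $x_{s,t}$, $s(x_{s,t})$ and $t(x_{s,t})$ all lie in $\mathrm{Orb}(x_{s,t})\subseteq Y$, the restrictions $s\restr Y$ and $t\restr Y$ still disagree at $x_{s,t}$; hence $s\mapsto s\restr Y$ is injective, so $\SS\cong\SS\restr Y\in\F(\tau)$ on the base $Y$. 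Counting gives $|Y|\le\binom{|S|}{2}(|S|+1)=\tfrac12(|S|^3-|S|)\le|S|^3$, as required (the one-element case is trivial, $\SS$ being representable on a base with at most one point). I expect the main obstacle to be not the construction but the bookkeeping in the preservation check for $\A$, $\sqcup$ and ${}^\uparrow$: matching each relativised operation on $Y$ with the ambient one is the only step that is not purely formal, and it is the precise locus of the no-range hypothesis.
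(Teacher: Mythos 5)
Your proposal is correct and is essentially the paper's own proof: your union of forward orbits $\bigcup_{s\neq t}\mathrm{Orb}(x_{s,t})$ is exactly the paper's set $Y=\set{x_{s,t},\,u(x_{s,t})\mid s\neq t,\ u\text{ defined at }x_{s,t}}$, and your forward-closure property is precisely the paper's key observation \eqref{eq:restr2}, used in the same way to match each relativised operation (including $\A$, $\sqcup$ and ${}^\uparrow$) with the ambient one and to retain the disagreement witnesses for injectivity. The only differences are cosmetic: you package the construction in orbit language, give the slightly sharper count $\binom{|S|}{2}(|S|+1)\leq |S|^3$, and make explicit the (correct) remark that the method fails for $\R$ because range requires backward witnesses.
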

\begin{proof} 
Let $\SS\in\F(\tau)$ be a finite algebra of functions over a base $X$.  
For each pair $(s, t)$ where  $s\neq t\in \SS$, let $x_{s, t}$ be an arbitrary element of $X$ 
such that $s$ disagrees with $t$ at $x_{s, t}$, so either both $s, t$ are defined at 
$x_{s, t}$ and $s(x_{s, t})\neq t(x_{s, t})$ or just one of them is defined 
(such a point must exist, since $s, t$ are distinct functions).  
Let
\[
Y=\set{x_{s, t}, u(x_{s, t})\mid s, t, u\in \SS,\; s\neq t,\; u\mbox{ defined at } x_{s, t}}.
\]
Recall from Theorem~\ref{thm:eq th} that $\SS\restr Y$ is the algebra of functions 
$\set{s\cap (Y\times Y)\mid s\in\SS}$ with operations obtained from $\SS$ by relativization.    
Clearly $|Y|\leq |\SS|^3$.  We must show that the map $\theta:s\mapsto s\cap(Y\times Y)$ is 
an isomorphism from $\SS$ to $\SS\restr Y$.

Observe
\begin{equation}\label{eq:restr2}
y\in Y,\; v\in\SS, \; v(y) \mbox{ defined } \Rightarrow v(y)\in Y
\end{equation}
because if $y=u(x_{s, t})$ then $v(y)=v(u(x_{s, t}))=(u\compo v)(x_{s, t})\in Y$.

For any $s\neq t\in \SS$, $s\cap(Y\times Y)$ disagrees with $t\cap(Y\times Y)$ at $x_{s, t}$, 
so $\theta(s)\neq \theta(t)$ and $\theta$ is injective.  
Clearly $0\cap(Y\times Y)=0,\; 1'\cap(Y\times Y)=1'_Y$ and 
$(s\cap(Y\times Y))\cdot(t\cap(Y\times Y))=(s\cdot t)\cap (Y\times Y)$, 
so $\theta$ respects $0, 1', \cdot$.  
Equation~\eqref{eq:restr2} shows that $\theta$ also respects 
$\compo, \sqcup, \D, \A, \Fix, {}^\uparrow$.  Take composition, for example.
\begin{align*}
(x, y)\in \theta(u;v)&\iff x, y\in Y \wedge (x, y)\in (u;v)\\
&\iff x\in Y\wedge y=(u;v)(x)=v(u(x))\;\;&\mbox{ (by \eqref{eq:restr2})}\\
&\iff (x, u(x))\in \theta(u)\wedge (u(x), y)\in \theta(v) \;\;&\mbox{ (by \eqref{eq:restr2})}\\
&\iff (x, y)\in \theta(u) \compo \theta(v)
\end{align*}
Similarly, for maximum iterate, we have the following.
\begin{align*}
(x, y)\in\theta( u^\uparrow) &\iff x\in Y \wedge (x, y)\in  u^\uparrow\\
&\iff x\in Y \wedge (\exists k\geq 0)\exists x_0, \ldots, x_k\bigwedge_{i<k}(x_i, x_{i+1})\in u \wedge x=x_0\wedge y=x_k\not\in \dom(u)\\
&\iff \exists x_0,\ldots, x_k\bigwedge_{i<k}(x_i, x_{i+1})\in\theta(u)\wedge x=x_0\wedge y=x_k\not\in\dom(\theta(u))\\
&\iff (x, y)\in(\theta(u))^\uparrow
\end{align*}
Checking the preservation of the other operations is similar.
Hence $\theta$ is an isomorphism.
\end{proof}

Similarly, the finite representation property is easy to establish for signatures 
that cannot express $\D$.
This leaves one group of cases.

\begin{problem} 
Let 
$\set{\compo,\meet,\D, \R}\subseteq\tau\subseteq
\set{\compo,\meet, \D, \A, \R, \Fix,\sqcup, {}^\uparrow,0,1'}$.  
Is it the case that every finite member of $\F(\tau)$ has a representation on a finite base?
In particular, does the finite representation property hold for the signature 
$\tau=\set{\compo,\meet, \A, \R,0}$ and the signature $\set{\compo,\meet, \D, \R,0}$?
\end{problem}

\end{document}